\newtheorem{theorem}{Theorem}[section]
\newtheorem{lemma}[theorem]{Lemma}
\newtheorem{corollary}[theorem]{Corollary}
\newtheorem{proposition}[theorem]{Proposition}
\theoremstyle{remark}  
\theoremstyle{definition}
\numberwithin{equation}{section} \makeatother
\DeclareMathOperator{\Cdb}{{\mathbb C}}
\DeclareMathOperator{\Ndb}{{\mathbb N}}
\begin{document}

\title{Metric characterizations II} 
\author[D. P. Blecher]{David P. Blecher}
\author[Matthew Neal]{Matthew Neal}

\address{Department of Mathematics, University of Houston, Houston, TX
77204-3008}
 \email[David P.
Blecher]{dblecher@math.uh.edu}
\address{Department of Mathematics,
Denison University, Granville, OH 43023}
\email{nealm@denison.edu} \thanks{Presented at 
AMS/SAMS Satellite Conference on Abstract Analysis, University of Pretoria, South Africa, 5-7 December 2011.
Revision of 2/23/2012 (Example after theorem 3.2 added).} \begin{abstract} The present paper is a sequel to our paper
``Metric characterization of isometries and of unital operator spaces and systems''.  We characterize certain common objects in 
the theory of operator spaces
(unitaries, unital operator spaces, operator systems, operator
algebras, and so on), in terms which are
purely {\em linear-metric}, by which we mean that they
only use the vector space structure of the space and its
matrix norms.  In the last part we give some
characterizations of operator algebras (which are not
linear-metric in our strict sense described in the paper).
\end{abstract}

\maketitle

\let\text=\mbox

\section{Introduction}

The present paper is a sequel to our paper
``Metric characterization of isometries and of unital operator spaces and systems''
 \cite{BN}.   The goal of both papers is to 
characterize certain common objects  in
the theory of operator spaces
(unitaries, 
unital operator spaces, operator systems, operator 
algebras, and so on), in terms which are 
purely {\em linear-metric}, by which we mean that they
only use the vector space structure of the space and its
matrix norms, in the spirit of Ruan's matrix norm characterization
of operator spaces \cite{Ru}, not mentioning
products, involutions, or any kind of function such as
 linear maps on the space.
 In the present paper we give new linear-metric characterizations
of unital operator spaces (or equivalently, of `unitaries' in 
an operator space).   Some of our characterizations should be useful
in future.  Others may look cumbersome, but their virtue is
 that it is nice to know that such characterizations 
exist (only using the norm and/or vector space structure).      
An example of one of our new characterizations is the following:
 
\begin{theorem}  \label{intro}  If $X$ is
an operator space and $u \in {\rm Ball}(X)$ then $(X,u)$
is a unital operator space (or equivalently, 
$u$ is a unitary in $X$) iff for all $n \in \Ndb, x \in M_n(X)$, 
$$\max \{ \Vert u_n + i^k x \Vert : k = 0,1,2,3 \} \geq \sqrt{1 + \Vert x \Vert_n}
.$$
\end{theorem}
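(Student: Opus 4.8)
The plan is to treat the two implications separately: the forward (``only if'') direction is an essentially computational $C^\ast$-inequality with room to spare, while the converse carries all the genuine content.

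\emph{Necessity.} Suppose $u$ is a unitary. Using the stated equivalence and the representation theory of unitaries, I would represent $X$ completely isometrically inside a ternary ring of operators $\mathcal{T}\subseteq B(K,H)$ with $u$ mapping to a unitary $U\colon K\to H$; then $u_n$ maps to the unitary $U_n=\mathrm{diag}(U,\dots,U)\colon K^n\to H^n$. Since left multiplication by the unitary $U_n^\ast$ is isometric and commutes with the scalars $i^k$, for $x\in M_n(X)$ we get $\|u_n+i^k x\|=\|1+i^k y\|$ with $y=U_n^\ast x$ and $\|y\|=\|x\|$, where $1=1_{K^n}$ is the identity of the unital $C^\ast$-algebra $B(K^n)$. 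Thus it suffices to prove the universal inequality $\max_k\|1+i^k y\|\ge\sqrt{1+\|y\|}$ for every $y$ in a unital $C^\ast$-algebra. The key point is that $\|z\|\ge\|\mathrm{Re}\,z\|$ for every $z$, while $\mathrm{Re}(1+i^k y)$ runs through $1+h,\,1-g,\,1-h,\,1+g$ as $k=0,1,2,3$, where $h=\mathrm{Re}\,y$ and $g=\mathrm{Im}\,y$ are self-adjoint. Reading off the spectrum gives $\max(\|1\pm s\|)=1+\|s\|$ for self-adjoint $s$, whence
\[
\max_k\|1+i^k y\|\ \ge\ \max\bigl(\|1\pm h\|,\|1\pm g\|\bigr)\ =\ 1+\max(\|h\|,\|g\|).
\]
As $\|y\|\le\|h\|+\|g\|\le 2\max(\|h\|,\|g\|)$, this yields $\max_k\|1+i^k y\|\ge 1+\tfrac12\|y\|\ge\sqrt{1+\|y\|}$, the last step because $(1+\tfrac12 t)^2=1+t+\tfrac14 t^2\ge 1+t$.

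\emph{Sufficiency.} Assume the displayed inequality holds for all $n$ and $x$. The case $n=1$, $x=0$ gives $\|u\|\ge 1$, so $\|u\|=1$. The crucial observation is that the bound contains only first-order information at $u$: replacing $x$ by $tx$ and letting $t\to 0^+$, and using $\sqrt{1+t\|x\|}=1+\tfrac{t}{2}\|x\|+O(t^2)$, the hypothesis is equivalent to the infinitesimal condition
\[
\max_k\ \limsup_{t\to 0^+}\frac{\|u_n+t\,i^k x\|-1}{t}\ \ge\ \tfrac12\|x\|\qquad(n\in\Ndb,\ x\in M_n(X)).
\]
By the standard support-functional description of one-sided norm derivatives at the norm-one element $u_n$, the left-hand side equals $\max_k\sup_\psi\mathrm{Re}\,\psi(i^k x)$, the supremum taken over the matrix norming functionals (matrix states at $u$). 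So the hypothesis says precisely that the matricial numerical range of $X$ relative to $u$ is wide in all four cardinal directions at every level, i.e. $\sup_\psi|\psi(x)|\ge\tfrac12\|x\|$ matricially. Note this is genuinely restrictive and is exactly what an identity enjoys.

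I would then show this condition forces $u$ to be a unitary, arguing by contraposition through the structure of the ternary envelope. If $u$ is not a unitary, one of the defect projections $1-uu^\ast$, $1-u^\ast u$ is nonzero in the envelope, and this should produce a direction $x\in M_n(X)$ (for a suitable amplification level $n$) lying in the defect space, along which $u_n+t\,i^k x$ grows like $\max(1,t\|x\|)=1$ rather than like $1+\tfrac{t}{2}\|x\|$; then the matrix states of $u$ fail to see $x$, giving $\sup_\psi|\psi(x)|<\tfrac12\|x\|$ and contradicting the infinitesimal condition. This is modelled on the computation with $X=\ell^\infty_2$ and $u=(1,0)$, where $x=(0,1)$ gives $\max_k\|u+i^k x\|=1<\sqrt{2}$. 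I expect the main obstacle to be exactly this last step: extracting, from the mere failure of unitality, an honest element of some $M_n(X)$ (rather than of the larger envelope) that witnesses the defect. Legitimizing this passage from the envelope back into $X$ is where the machinery of the companion paper \cite{BN}---the characterization of unital operator spaces together with the structure of the operator-space envelope---must be brought to bear.
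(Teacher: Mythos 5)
Your necessity (``only if'') argument is correct and is genuinely different from the paper's. The paper proves this direction in Theorem \ref{tc3} by a $4\times 4$ matrix computation: it sandwiches $2\sqrt{1+\Vert x\Vert_n}$ between the norm of a big matrix (bounded below via Lemma \ref{abbal} and Lemma \ref{1x01l}) and the sum of the norms of two $2\times 2$ matrices, each dominated by $2\max_k \Vert u_n + i^k x\Vert$ via Lemmas \ref{abbal} and \ref{ambbal}. Your route --- reduce to $\Vert 1 + i^k y\Vert$ in a unital $C^*$-algebra, split $y$ into real and imaginary parts, and use $\max\{\Vert 1+s\Vert,\Vert 1-s\Vert\} = 1+\Vert s\Vert$ for selfadjoint $s$ --- is more elementary and in fact proves the stronger estimate $\max_k\Vert u_n + i^k x\Vert \geq 1 + \tfrac{1}{2}\Vert x\Vert_n$, of which the stated bound $\sqrt{1+\Vert x\Vert_n}$ is a weakening. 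That half of your proposal stands on its own, and your convexity observation correctly explains why the hypothesis for small perturbations already yields it for all $x$ (which is why the body version, Theorem \ref{tc3}, only assumes the inequality for $x$ of small norm).

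The sufficiency (``if'') half has a genuine gap, which you flag yourself: nothing in the proposal passes from the wide-numerical-range condition back to unitarity of $u$. Your contrapositive sketch requires producing, from the mere failure of unitarity, an element of some $M_n(X)$ --- not merely of the larger ternary envelope --- that the norming functionals at $u_n$ fail to see, and you give no mechanism for this; it is exactly where the argument would die. (A smaller issue: ``precisely'' overstates the numerical-range reformulation, since $\max_k {\rm Re}(i^k z)$ can be as small as $|z|/\sqrt{2}$, so your condition and $\sup_\psi |\psi(x)| \geq \tfrac12 \Vert x \Vert$ agree only up to constants.) The paper closes this direction with one trick your proposal misses entirely: given $x \in M_n(X)$, let $\tilde x \in M_{2n}(X)$ be the block matrix with $x$ in the $1$-$2$ corner and zeroes elsewhere. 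Then $u_{2n} + i^k \tilde x$ is exactly the triangular matrix $t^u_{i^k x}$ of Section 2, and all four of these have the same norm (a unimodular scalar in the off-diagonal corner is removed by multiplying a row and a column by scalar unitaries). Hence the four-fold max hypothesis applied to $\tilde x$ collapses to the single inequality $\Vert t^u_x \Vert \geq \sqrt{1 + \Vert x\Vert_n}$, which is precisely the hypothesis of Theorem \ref{tcl}; that theorem is proved by squaring $t^u_x$ with the $C^*$-identity in the ternary envelope to get $\Vert u^* x\Vert = \Vert x u^*\Vert = \Vert x\Vert$ for all matrices $x$, and then invoking the proof of Theorem 2.4 of \cite{BN}. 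So your instinct that the machinery of \cite{BN} must be brought to bear is right, but the missing idea is structural rather than functional-analytic: placing $x$ off the diagonal makes the rotations $i^k$ harmless, converting the max condition into the triangular-matrix condition that the \cite{BN} technology can handle.
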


Here $u_n$ is the diagonal matrix $u \otimes I_n$
in $M_n(X)$ with $u$ in
each diagonal entry.   One advantage of this characterization is that
it avoids `matrices of matrices', in contrast to our other ones.   We will give a convincing  
illustration of the use of this criterion after its proof (after Theorem \ref{tc3} below). 

We also give several other 
assorted results and observations, 
most of these being complements to various results in \cite{BN}.
   The structure of our paper is as follows: In Section 2 we
present some matrix norm formulae that will be used later in
the paper.  In Section 3 (resp.\ Section 4)
we give new linear-metric characterizations
of unital operator spaces (resp.\ operator systems).  In
Section 3 we also relate unital operator spaces to our previous paper
\cite{BNc}, by characterizing compact projections
in a $C^*$-algebra in terms of unital operator spaces.
In the remaining sections,  
 we characterize operator algebra structures on operator 
spaces in various ways.   For example we give new variants of the 
characterization of 
operator algebras due to the first author with Ruan and Sinclair \cite{BRS}.    
In various remarks scattered through our paper we indicate where a result
may be strengthened, or give counterexamples ruling out certain directions 
of enquiry.

Turning to definitions, all vector spaces are over the complex
field $\Cdb$.  The letters $H, K$ are usually reserved for Hilbert
spaces.  A given cone in a space $X$ will sometimes be written as
$X_+$, and
 $X_{\rm sa} = \{ x \in X : x = x^* \}$ assuming that there
is an involution $*$ around.  The reader may consult \cite{BLM},
or one of the other books on operator spaces, for more information
if needed below.   All normed (or operator) spaces are
assumed to be complete.  
A {\em unital operator space} is a subspace
 of a unital $C^*$-algebra containing the identity. More abstractly, a
unital operator space is a pair $(X,u)$ consisting of an operator
space $X$ containing a fixed element $u$ such that there exists a
Hilbert space $H$ and a complete isometry $T : X \to B(H)$ with
$T(u) = I_H$.   In this case we also say that 
$u$ is a {\em unitary in} $X$.
An {\em operator system} is a selfadjoint subspace of a
unital $C^*$-algebra containing the identity; or 
more abstractly, a unital operator
space $(X,u)$ for which there
exists a linear complete isometry $T : X \to B(H)$ with $T(u) = I_H$
and $T(X)$ selfadjoint.  An {\em operator algebra} is
an operator space $A$ which is an algebra such that there
exists a completely isometric homomorphism from $A$ 
into a $C^*$-algebra.  An operator algebra is 
{\em unital} if it has an identity of norm $1$.
  
A {\em TRO} (ternary ring of
operators) is a closed
subspace $Z$ of a C*-algebra, or of $B(K,H)$,  such that $Z Z^* Z \subset Z$.
We refer to e.g.\ \cite{Ham,BLM} for the basic theory of TROs.
A {\em ternary morphism} on a TRO $Z$ is a linear map $T$ such that
$T(x y^* z) = T(x) T(y)^* T(z)$ for all $x, y, z \in Z$.
We write $Z Z^*$ for the closure of the linear span of
products $z w^*$ with $z, w \in Z$, and similarly for $Z^* Z$. These are
$C^*$-algebras.
 The {\em ternary envelope}
of an operator space  $X$ is a pair $({\mathcal T}(X),j)$ consisting of
a TRO ${\mathcal T}(X)$ and a  completely isometric linear map $j : X
\to {\mathcal T}(X)$, such that
${\mathcal T}(X)$ is generated by $j(X)$ as a TRO (that is, there is no closed
subTRO containing $j(X)$), and which has the following
property: given any completely isometric
linear map $i$ from $X$ into a TRO $Z$ which is
generated by $i(X)$, there exists a (necessarily unique and surjective)
ternary morphism $\theta : Z \to {\mathcal T}(X)$ such that
$\theta \circ i = j$.
If $(X,u)$ is a unital operator space then the ternary envelope
may be taken to be the {\em $C^*$-envelope} of e.g.\ \cite[Section 4.3]{BLM};
this is a $C^*$-algebra $C^*_e(X)$ with identity $u$.
If $X$ is an operator system then $X$ is  a selfadjoint unital subspace
of $C^*_e(X)$.

An element $u$ in an operator space 
$X$ is called a {\em coisometry} (resp.\ {\em isometry})
{\em in} $X$, if $X$ may be linearly completely isometrically 
embedded in a TRO $Z$ such that $u u^* = 1_{Z Z^*}$ 
(resp.\  $u^* u = 1_{Z^* Z}$).  In this case, $Z$ may be taken to be 
the ternary envelope of $X$, or it may be taken to be $B(K,H)$ and the 
$1$ in the last line replaced by the identity operator on the 
Hilbert space $H$ (resp.\  $K$).
Coisometries and isometries in $X$ were characterized purely linear-metrically
in \cite{BN} (see also Theorem \ref{uospr} below).  Also, $u$ is
a unitary in $X$ iff it is both a coisometry and an isometry in $X$
(see \cite[Lemma 2.3]{BN}).    
 
\section{Some matrix norm formulae}

We collect several formulae for matrix norms that we use later in the paper,
most of which are known. 

\begin{lemma} \label{abbal} Let $A$ be a $C^*$-algebra (or operator space). We have  
\begin{equation} \label{abba} \Vert \left[ \begin{array}{ccl} a & b \\
b & a \end{array} \right] \Vert = \max \{ ||a+b|| , ||a-b|| \} , \qquad 
a, \, b \in A . \end{equation}
\end{lemma}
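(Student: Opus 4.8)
The plan is to prove the matrix norm formula
\[
\Vert \left[ \begin{array}{cc} a & b \\ b & a \end{array} \right] \Vert = \max \{ \Vert a+b \Vert, \Vert a-b \Vert \}
\]
by diagonalizing the $2 \times 2$ block matrix via a fixed unitary similarity. The key observation is that the matrix $T = \left[ \begin{smallmatrix} a & b \\ b & a \end{smallmatrix} \right]$ commutes with the swap structure, so I would conjugate by the scalar unitary $U = \tfrac{1}{\sqrt{2}} \left[ \begin{smallmatrix} 1 & 1 \\ 1 & -1 \end{smallmatrix} \right] \in M_2(\Cdb)$, which acts as $U \otimes 1$ on $M_2(A)$.

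First I would compute $U^* T U$ explicitly. Writing $T = I_2 \otimes a + E \otimes b$ where $E = \left[ \begin{smallmatrix} 0 & 1 \\ 1 & 0 \end{smallmatrix} \right]$ is the flip, the point is that $U$ diagonalizes $E$: indeed $U^* E U = \left[ \begin{smallmatrix} 1 & 0 \\ 0 & -1 \end{smallmatrix} \right]$. Since $U^* I_2 U = I_2$ and the similarity is by scalar matrices (so it commutes with tensoring by elements of $A$), I obtain
\[
U^* T U = \left[ \begin{array}{cc} a+b & 0 \\ 0 & a-b \end{array} \right].
\]
The norm of a block-diagonal matrix with entries from an operator space equals the maximum of the norms of the diagonal blocks, which gives $\Vert U^* T U \Vert = \max \{ \Vert a+b \Vert, \Vert a-b \Vert \}$.

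Finally I would invoke the fact that conjugation by a unitary from $M_2$ (equivalently, left and right multiplication by the unitaries $U$ and $U^*$) is an isometry on $M_2(A)$. For a $C^*$-algebra this is immediate, and for a general operator space it follows because $M_2(A)$ is a bimodule over $M_2(\Cdb)$ with $\Vert \alpha x \beta \Vert \le \Vert \alpha \Vert \Vert x \Vert \Vert \beta \Vert$, so $\Vert U^* T U \Vert = \Vert T \Vert$ (the reverse inequality comes from conjugating back by $U$ and $U^*$, which are their own adjoints up to the obvious roles). Combining the two displays yields the claimed equality.

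I do not expect any serious obstacle here; the entire argument is the single unitary diagonalization, and the only point requiring a word of care is the justification that block-diagonal norms split as a maximum and that the $M_2(\Cdb)$-bimodule action is contractive in both directions on an arbitrary operator space. Both are standard consequences of Ruan's axioms, so the proof is essentially a one-line computation dressed up with these routine isometry facts.
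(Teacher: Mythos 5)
Your proof is correct, but it runs along a different track than the paper's. The paper disposes of the lemma in one line by observing that the map
\[
\left[ \begin{array}{cc} a & b \\ b & a \end{array} \right] \longmapsto (a+b,\, a-b)
\]
is a faithful $*$-homomorphism from the $C^*$-algebra of such matrices into $A \oplus^\infty A$, and then invokes the standard fact that injective $*$-homomorphisms between $C^*$-algebras are isometric; the operator space case follows by restricting this identity from a containing $C^*$-algebra such as $M_2(B(H))$. You instead implement the same underlying algebra (diagonalizing the flip $E$) by an explicit unitary conjugation $U^* T U = \mathrm{diag}(a+b, a-b)$, and justify isometry via the $M_2(\Cdb)$-bimodule contractivity from Ruan's axioms together with the block-diagonal norm formula. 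The two proofs exploit the same decomposition, but the mechanisms differ: the paper leans on $C^*$-algebra theory (faithfulness implies isometry), which is maximally brief but is really a statement about the $C^*$-case; your argument works directly from the operator space axioms, so it treats the parenthetical ``(or operator space)'' in the statement on an equal footing without any appeal to spectral theory, at the modest cost of having to cite two routine facts (unitary invariance and the maximum formula for block diagonals) rather than one. Both are complete; your version is slightly longer but arguably more self-contained for the abstract operator space reader.
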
 \begin{proof} This is well known: the map 
 taking the $2 \times 2$  matrix above to $(a + b,a-b)$, is a faithful $*$-homomorphism, from the
$C^*$-algebra of such matrices into $A \oplus^\infty A$.
\end{proof} 

\begin{lemma} \label{ambbal}  Let $A$ be a $C^*$-algebra (or operator space). We have  
 \begin{equation} \label{ambba} \Vert \left[ \begin{array}{ccl} a & -b \\ b & a \end{array}
 \right] \Vert = \max \{ ||a+ib|| , ||a-ib|| \} ,  \qquad
a, \,  b \in A . \end{equation}
\end{lemma}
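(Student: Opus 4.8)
The plan is to mimic the proof of Lemma~\ref{abbal}, replacing the Hadamard-type scalar unitary used there by one adapted to the skew-symmetry of the present matrix. First I would write
$$M = \left[ \begin{array}{cc} a & -b \\ b & a \end{array} \right] = \left[ \begin{array}{cc} 1 & 0 \\ 0 & 1 \end{array} \right] a + \left[ \begin{array}{cc} 0 & -1 \\ 1 & 0 \end{array} \right] b ,$$
and note that the scalar matrix $K = \left[ \begin{array}{cc} 0 & -1 \\ 1 & 0 \end{array} \right]$ has eigenvalues $\pm i$, with orthonormal eigenvectors $\tfrac{1}{\sqrt{2}}(i,1)$ and $\tfrac{1}{\sqrt{2}}(-i,1)$. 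Taking $U \in M_2(\Cdb)$ to be the unitary whose columns are these eigenvectors gives $U^* K U = {\rm diag}(i,-i)$. Since conjugation by $U$ fixes the diagonal part ${\rm diag}(a,a)$, a short computation then yields
$$U^* M U = \left[ \begin{array}{cc} a + ib & 0 \\ 0 & a - ib \end{array} \right] .$$

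Next I would invoke two standard facts. Conjugation $x \mapsto U^* x U$ by a scalar unitary $U$ is a complete isometry of $M_2(A)$: for a $C^*$-algebra it is a $*$-automorphism, while for a general operator space it follows from Ruan's axioms, since $\Vert U^* x U \Vert_2 \le \Vert x \Vert_2$ and symmetrically. Secondly, the norm of a diagonal element ${\rm diag}(c,d)$ of $M_2(A)$ equals $\max \{ \Vert c \Vert, \Vert d \Vert \}$, as one sees by embedding $A$ completely isometrically in some $B(H)$ and viewing the diagonal as an operator on $H \oplus H$. Combining these with the displayed identity gives $\Vert M \Vert = \max \{ \Vert a+ib \Vert, \Vert a-ib \Vert \}$, which is (\ref{ambba}).

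In the $C^*$-algebra case one can instead argue exactly as in Lemma~\ref{abbal}: the map $\left[ \begin{array}{cc} a & -b \\ b & a \end{array} \right] \mapsto (a+ib, a-ib)$ should be a faithful $*$-homomorphism of the $C^*$-subalgebra of all such matrices into $A \oplus^\infty A$, hence isometric, and one would verify this by checking closure under products and adjoints. I expect the only point requiring genuine care is the operator-space case, where there is no product or involution to exploit: there the argument must rest on the unitary-invariance of the matrix norms coming from Ruan's axioms rather than on a $*$-homomorphism, so the essential step is confirming that the particular $U$ above does diagonalize $M$ in the stated way.
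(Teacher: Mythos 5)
Your proof is correct, but it takes a different route from the paper's. The paper's proof is a two-line reduction to Lemma~\ref{abbal}: substitute $ib$ for $b$ in (\ref{abba}), then multiply the second row by $-i$ and the second column by $i$ (i.e., multiply on the left and right by the diagonal scalar unitaries ${\rm diag}(1,-i)$ and ${\rm diag}(1,i)$), which transforms $\left[ \begin{smallmatrix} a & ib \\ ib & a \end{smallmatrix} \right]$ into $\left[ \begin{smallmatrix} a & -b \\ b & a \end{smallmatrix} \right]$ without changing the norm. You instead give a self-contained argument: diagonalize the rotation matrix $K$ by the scalar unitary $U$ built from its eigenvectors, obtaining $U^* M U = {\rm diag}(a+ib, a-ib)$ (your computation checks out), and then invoke unitary-invariance of matrix norms together with Ruan's axiom that the norm of a diagonal is the maximum of the norms of its entries. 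Both arguments ultimately rest on the same principle --- multiplication by scalar unitaries preserves operator space matrix norms --- but yours bypasses Lemma~\ref{abbal} entirely and handles the $C^*$-algebra and operator space cases uniformly, whereas the paper's leans on the previous lemma for brevity; your closing worry that the operator-space case is the delicate one is unfounded, since the Ruan-axiom argument you sketch is exactly what makes it work. Your alternative $*$-homomorphism argument for the $C^*$-case (the matrices of the given form are a closed $*$-subalgebra and $M \mapsto (a+ib,a-ib)$ is a faithful $*$-homomorphism into $A \oplus^\infty A$) is also valid and is the precise analogue of the paper's proof of Lemma~\ref{abbal}.
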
 \begin{proof} 
To see this, apply (\ref{abba}) with $b$ replaced by $ib$, then
multiply, first, the second row by $-i$, and
second, the second column by $i$.
\end{proof}

Let $X$ be an operator space, and  $v \in X$.
If $n \in \Ndb$ and $x  \in M_n(X)$ we write  $$t^v_x = \left[ \begin{array}{ccl} v_n & x \\ 0 & v_n \end{array} \right] .$$
If $(X,v)$ is a unital operator space, and we identify
$v = 1$, then we  write $t^v_x$ as $t_x$.

\begin{lemma} \label{1x01l}
 If $X$ is a unital operator space then
  \begin{equation} \label{1x01} \Vert t_x  \Vert^2 = \frac{1}{2}[2 + \Vert x \Vert^2  + \Vert x \Vert
 \sqrt{\Vert x \Vert^2 + 4}] \geq 1 + \Vert x \Vert,  \qquad n \in \Ndb, \; \, x  \in M_n(X). \end{equation} 
Thus $\Vert t_x \Vert
  \geq \sqrt{1 + \Vert x \Vert}$.
\end{lemma}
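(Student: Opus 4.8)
The plan is to represent the unital operator space completely isometrically and unitally in some $B(H)$, identifying $1$ with $I_H$; then $t_x$ lives in $M_{2n}(B(H)) = B(H^{2n})$ and its matrix norm is simply its operator norm. By the $C^*$-identity $\Vert t_x \Vert^2 = \Vert t_x^* t_x \Vert$, and a direct multiplication gives
\[
t_x^* t_x = \left[ \begin{array}{cc} I & x \\ x^* & I + x^* x \end{array}\right],
\]
a positive operator whose norm equals the supremum of its spectrum. So the whole problem reduces to locating the top of the spectrum of this $2\times 2$ operator matrix.

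First I would relate the spectrum of $t_x^* t_x$ to that of the positive operator $T = x^* x$. Fixing $\lambda > 1$ and forming $\lambda I - t_x^* t_x$, the scalar $(1,1)$-corner $(\lambda - 1)I$ is invertible, so a Schur-complement computation shows that $\lambda I - t_x^* t_x$ is invertible if and only if $T - g(\lambda) I$ is invertible, where $g(\lambda) = (\lambda-1)^2/\lambda$. This is the step I expect to be the crux: the matrix eigenvector heuristic ``$x^*x\, \eta = \tfrac{(\lambda-1)^2}{\lambda}\eta$'' must be turned into a genuine operator statement, and phrasing it through invertibility of the Schur complement (rather than through eigenvectors, which need not exist) is what makes it rigorous.

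Now $g$ is strictly increasing on $(1,\infty)$ with range $(0,\infty)$, and since $T \geq 0$ we have $\max \sigma(T) = \Vert T \Vert = \Vert x \Vert^2$, with this value actually lying in $\sigma(T)$. Hence $\lambda I - t_x^* t_x$ is invertible for every $\lambda > \lambda_0$ but fails to be invertible at $\lambda = \lambda_0$, where $\lambda_0 > 1$ is the solution of $g(\lambda) = \Vert x \Vert^2$, i.e.\ of $(\lambda - 1)^2 = \Vert x \Vert^2 \lambda$. Thus $\Vert t_x \Vert^2 = \lambda_0$; solving the quadratic $\lambda^2 - (2 + \Vert x \Vert^2)\lambda + 1 = 0$ for its larger root and simplifying $\sqrt{(2 + \Vert x \Vert^2)^2 - 4} = \Vert x \Vert \sqrt{\Vert x \Vert^2 + 4}$ yields exactly the asserted formula (\ref{1x01}).

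Finally, for the inequality, after the factor $\tfrac12$ it suffices to check $\Vert x \Vert^2 + \Vert x \Vert \sqrt{\Vert x \Vert^2 + 4} \geq 2 \Vert x \Vert$; dividing by $\Vert x \Vert$ (the case $\Vert x \Vert = 0$ being trivial, with equality) this is just $\Vert x \Vert + \sqrt{\Vert x \Vert^2 + 4} \geq 2$, which is clear since $\sqrt{\Vert x \Vert^2 + 4} \geq 2$. Taking square roots gives $\Vert t_x \Vert \geq \sqrt{1 + \Vert x \Vert}$.
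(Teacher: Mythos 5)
Your proof is correct, but it takes a genuinely different route from the paper's: the paper disposes of Lemma \ref{1x01l} in a single line, citing it as a special case of a more general matrix-norm formula, namely (2.1) in \cite{BM2}, whereas you give a self-contained spectral computation. Your argument — represent $(X,1)$ unitally in $B(H)$, compute
$$t_x^* t_x = \left[ \begin{array}{cc} I & x \\ x^* & I + x^*x \end{array}\right],$$
and locate $\max \sigma(t_x^*t_x)$ by noting that for $\lambda>1$ the Schur complement of the $(1,1)$-corner is $(\lambda-1)I - \frac{\lambda}{\lambda-1}\,x^*x$, so that $\lambda I - t_x^*t_x$ is invertible iff $g(\lambda)I - x^*x$ is, with $g(\lambda)=(\lambda-1)^2/\lambda$ — is sound, and your instinct that the Schur-complement/invertibility phrasing is the crux is right: it is exactly what replaces the finite-dimensional eigenvector heuristic in infinite dimensions. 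Since $g$ is strictly increasing on $(1,\infty)$ and $\Vert x \Vert^2 = \max\sigma(x^*x)$, the top of the spectrum is the larger root of $\lambda^2 - (2+\Vert x\Vert^2)\lambda + 1 = 0$ (the unique root in $(1,\infty)$, the two roots having product $1$), which is precisely the right-hand side of (\ref{1x01}); the closing inequality is elementary, as you say. One pedantic point: when $\Vert x \Vert = 0$ there is no $\lambda_0>1$ solving $g(\lambda)=\Vert x\Vert^2$, so that case should be dismissed first as trivial ($t_x = I_{2n}$), just as you did for the final inequality. As for what each approach buys: the paper's citation is maximally short and situates the lemma inside a known, more general norm formula from the operator-algebras-as-Banach-algebras circle of ideas, while yours is elementary and self-contained, and it makes visible exactly where the quadratic — hence the precise constant $\frac{1}{2}[2+\Vert x\Vert^2 + \Vert x\Vert\sqrt{\Vert x\Vert^2+4}]$ — comes from, with no dependence on outside references.
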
 \begin{proof}   This is e.g.\  a consequence of the 
more general formula (2.1) in \cite{BM2}.
\end{proof}
 
We will see in Theorem \ref{tcl} 
that (the matricial version of) this condition characterizes unital operator
 spaces.

Let $X$ be an operator space  possessing a 
conjugate linear involution $* : X \to X$, and suppose that $v \in X$.  
If $n \in \Ndb$ and $x  = [x_{ij}]  \in M_n(X)$ 
define
$x^* = [x^*_{ji}]$, and   $$s^v_x = \left[ \begin{array}{ccl} v_n & x \\ x^* & v_n \end{array}
 \right] \; , \; \; \, \; \; r^v_x = \left[
 \begin{array}{ccl}    v_n & x \\ -x^* & v_n \end{array} \right] .$$
If $(X,v)$ is a unital operator space, and we identify  
$v = 1$, then we write $s^v_x$ as $s_x$ and $r^v_x$ as $r_x$.

\begin{lemma} \label{sf}  If $X$ is an operator system 
then \begin{equation} \label{sx} \Vert s_x \Vert =  1 +  ||x||
, \qquad n \in \Ndb, \; \, x  \in M_n(X). \end{equation}
and 
\begin{equation} \label{rx} \Vert r_x \Vert = \sqrt{1 + \Vert x \Vert^2}
, \qquad n \in \Ndb, \; \, x  \in M_n(X). \end{equation} \end{lemma}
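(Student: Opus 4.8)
The plan is to reduce both identities to routine $C^*$-algebra computations, the operator system hypothesis being exactly what makes this reduction legitimate. Since $(X,v)$ is an operator system, we may regard $X$ as a selfadjoint unital subspace of some $B(H)$ with $v = I_H$, in such a way that the abstract involution on $X$ is the restriction of the adjoint on $B(H)$. Passing to matrices, $T_n : M_n(X) \to M_n(B(H))$ is a complete isometry carrying $x^* = [x_{ji}^*]$ to the genuine operator adjoint of $T_n(x)$, and similarly $M_{2n}(X) \subseteq M_{2n}(B(H)) = B(H^{(2n)})$. Hence $s_x$ and $r_x$ sit inside a $C^*$-algebra with $v_n = 1_n$ the identity, and their norms can be computed there.

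For (\ref{rx}) I would just compute $r_x^* r_x$ by block multiplication. Because of the sign in the $(2,1)$-entry of $r_x$, the off-diagonal products cancel and one is left with the diagonal operator $r_x^* r_x = \left[ \begin{array}{cc} 1_n + xx^* & 0 \\ 0 & 1_n + x^*x \end{array} \right]$. Its norm is $\max \{ 1 + \Vert xx^* \Vert, 1 + \Vert x^*x \Vert \} = 1 + \Vert x \Vert^2$, using that a positive element (and $1$ plus it) attains its norm on its spectrum together with $\Vert xx^* \Vert = \Vert x^* x \Vert = \Vert x \Vert^2$. Taking square roots gives (\ref{rx}).

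For (\ref{sx}) I would write $s_x = 1 + h$ with $h = \left[ \begin{array}{cc} 0 & x \\ x^* & 0 \end{array} \right]$. Here $h$ (hence $s_x$) is selfadjoint, and $\Vert h \Vert = \Vert x \Vert$ since $h^2 = \left[ \begin{array}{cc} xx^* & 0 \\ 0 & x^*x \end{array} \right]$ has norm $\Vert x \Vert^2$. The triangle inequality gives the upper bound $\Vert s_x \Vert = \Vert 1 + h \Vert \le 1 + \Vert h \Vert = 1 + \Vert x \Vert$. For the matching lower bound I would use the symmetry of $h$: conjugation by the selfadjoint unitary $\left[ \begin{array}{cc} 1_n & 0 \\ 0 & -1_n \end{array} \right]$ carries $h$ to $-h$, so $\sigma(h) = -\sigma(h)$; as $\Vert h \Vert$ is attained on $\sigma(h)$ ($h$ being selfadjoint), we get $\Vert x \Vert = \Vert h \Vert \in \sigma(h)$, whence $1 + \Vert x \Vert \in \sigma(s_x)$ and $\Vert s_x \Vert \ge 1 + \Vert x \Vert$. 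Combining the bounds yields (\ref{sx}).

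The one point deserving care is this last lower bound. For a general selfadjoint $h$ the equality $\Vert 1 + h \Vert = 1 + \Vert h \Vert$ can fail (e.g.\ when $\sigma(h) \subseteq [-\Vert h \Vert, 0]$, as for $h = -\Vert h\Vert \, p$ with $p$ a nontrivial projection), so the triangle inequality alone is not enough. It is precisely the spectral symmetry of the off-diagonal element $h$ that forces $+\Vert x \Vert$, and not merely $-\Vert x \Vert$, into $\sigma(h)$, and this is the step I would write out most carefully. The remainder is elementary block-matrix algebra inside $B(H^{(2n)})$.
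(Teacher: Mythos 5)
Your proof is correct, and it is essentially the argument the paper has in mind: the paper itself gives no details, dismissing (\ref{sx}) as a well-known easy exercise and (\ref{rx}) as ``a simple application of the $C^*$-identity'' (citing \cite{BN}), and your block computation of $r_x^* r_x = \left[ \begin{array}{ccl} 1+xx^* & 0 \\ 0 & 1+x^*x \end{array} \right]$ is precisely that application. Your treatment of the lower bound in (\ref{sx}) via the spectral symmetry $uhu = -h$ of the off-diagonal selfadjoint element $h$ is a standard and valid way to settle the ``easy exercise,'' and you rightly flag that the triangle inequality alone cannot give it.
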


\begin{proof}  The first is
  well known (and an easy exercise).  The second is from \cite{BN}, and is
a  simple application of the $C^*$-identity. 
\end{proof}

In Proposition \ref{chos} we show that formula (\ref{rx}) characterizes operator systems.

\section{New metric-linear characterizations of unital operator spaces}

We now show that
the inequality in Lemma \ref{1x01l} characterizes unital operator spaces.  Before we prove
  this, note that $\Vert t^v_x \Vert
  \geq \sqrt{1 + \Vert x \Vert_n}$ for all $x \in X$ iff
  $$\left\| \left[ \begin{array}{ccl} \lambda v_n & x \\
0 & \lambda  v_n \end{array}
 \right] \right\| \geq \sqrt{|\lambda|^2 + |\lambda|  \Vert x
\Vert_n}$$ for all $\lambda \in \Cdb$ and $x \in M_n(X)$. In fact it
is enough to take $\lambda > 0$ here, and $\Vert x \Vert = 1$.

\begin{theorem} \label{tcl}  If $v \in {\rm Ball}(X)$ then $(X,v)$
is a unital operator space iff $\Vert t^v_x \Vert
  \geq \sqrt{1 + \Vert x \Vert_n}$ for all $x \in M_n(X)$ of small norm,
and all $n \in \Ndb$.
\end{theorem}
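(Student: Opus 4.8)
The forward (``only if'') direction is immediate: if $(X,v)$ is a unital operator space we may identify $v = 1$, so that $t^v_x = t_x$, and Lemma~\ref{1x01l} gives $\Vert t_x \Vert \geq \sqrt{1 + \Vert x \Vert_n}$ for \emph{all} $x$, in particular for those of small norm. For the converse, the plan is to show that $v$ is simultaneously a coisometry and an isometry in $X$; by \cite[Lemma 2.3]{BN} this is exactly the assertion that $v$ is a unitary, i.e.\ that $(X,v)$ is a unital operator space. First note that taking $x = 0$ forces $\Vert v \Vert \geq 1$, so $\Vert v \Vert = 1$. To access the ternary structure, embed $X$ completely isometrically in its ternary envelope ${\mathcal T}(X)$, represented nondegenerately as a TRO $Z \subseteq B(K,H)$ with $ZZ^* \subseteq B(H)$ and $Z^*Z \subseteq B(K)$ acting nondegenerately; then $v \in B(K,H)$ with $\Vert v \Vert = 1$, and ``$v$ is a coisometry (resp.\ isometry) in $X$'' amounts to $v v^* = I_H$ (resp.\ $v^* v = I_K$).

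The governing computation is
\[ \Vert t^v_x \Vert^2 = \left\| \left[ \begin{array}{cc} v_n v_n^* + x x^* & x v_n^* \\ v_n x^* & v_n v_n^* \end{array} \right] \right\| , \]
together with the analogous formula for $(t^v_x)^* t^v_x$, in which $v_n^* v_n$ replaces $v_n v_n^*$; thus the two quantities $v v^*$ and $v^* v$ control $\Vert t^v_x \Vert$. I would argue by contraposition, producing from a defect of $v$ a small $x \in M_n(X)$ that violates the inequality. Suppose $v$ is not an isometry, so $v^* v \neq I_K$ and there is a unit vector on which $v$ is a strict contraction. The mechanism, already visible in the scalar model $v = {\rm diag}(1, 1-\epsilon)$, is that one may choose $x$ whose initial and final spaces sit on the defective part of $v$, so that $t^v_x$ block-diagonalizes and its norm collapses to $\Vert v \Vert = 1$, whence $\Vert t^v_x \Vert^2 = 1 < 1 + \Vert x \Vert$ for every such $x$ of small norm. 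The same scheme applied to $v^*$ handles the coisometry case. The scaling observation recorded just before the statement lets me pass freely between the small-norm regime and the homogeneous condition, so that detecting a defect at arbitrarily small $\Vert x \Vert$ suffices.

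The step where the operator-space hypothesis genuinely enters, and which I expect to be the main obstacle, is that the perturbation $x$ must be drawn from $M_n(X)$ rather than chosen freely in $B(K,H)$: the $t$-condition is a single scalar inequality about a $2 \times 2$ triangular matrix, whereas the conclusion is structural information about the partial-isometry data $v v^*, v^* v$ of $v$ inside ${\mathcal T}(X)$. To bridge the two I would invoke the linear-metric characterizations of coisometries and isometries from \cite{BN} (Theorem~\ref{uospr}), which guarantee that a failure of $v v^* = I_H$ or $v^* v = I_K$ is already witnessed by a genuine element of some $M_n(X)$ — this is exactly where the fact that $X$ generates ${\mathcal T}(X)$ as a TRO is used — and then transport that witness into the $(1,2)$-corner of $t^v_x$. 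Once such an $x$ is in hand, the remaining norm computations (the block-diagonalization and the resulting exact value $\Vert t^v_x \Vert = \Vert v \Vert$) are routine.
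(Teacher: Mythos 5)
Your forward direction and your overall target for the converse (show $v$ is both a coisometry and an isometry, then cite \cite[Lemma 2.3]{BN}) agree with the paper. The genuine gap is the mechanism you propose for the contrapositive step, i.e.\ producing from a defect of $v$ a small $x \in M_n(X)$ that violates the $t$-condition. You want an $x$ ``whose initial and final spaces sit on the defective part of $v$,'' so that $t^v_x$ block-diagonalizes and $\Vert t^v_x \Vert$ collapses to $\Vert v \Vert = 1$. As you yourself note, elements of $M_n(X)$ cannot in general be localized this way; but your proposed repair via Theorem \ref{uospr} does not supply such an element. When $v$ fails to be a coisometry, Theorem \ref{uospr} yields only a norm-one $x \in M_n(X)$ with $\Vert v_n v_n^* + x x^* \Vert < 2$, i.e.\ a failure of the row-norm identity; nothing about such an $x$ makes $t^v_x$ block-diagonal, and the exact value $\Vert t^v_x \Vert = \Vert v \Vert$ is neither achievable in general nor needed. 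So the step you dismiss as ``routine'' is precisely the analytic content of the theorem, and as described it fails.

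What is actually needed is an upper bound on $\Vert t^v_x \Vert$ that exposes a defect quantity, and this is what the paper's proof supplies, working directly rather than by contraposition: since $\Vert t^v_x \Vert^2 = \Vert (t^v_x)^* t^v_x \Vert \leq 1 + \Vert x \Vert^2 + \Vert v_n^* x \Vert$, writing $x = cy$ with $\Vert y \Vert = 1$ and letting $c \searrow 0$, the hypothesis forces $\Vert v_n^* y \Vert \geq 1$, hence $\Vert v_n^* x \Vert = \Vert x \Vert$ for all $x \in M_n(X)$; the symmetric computation with $(t^v_x)(t^v_x)^*$ gives $\Vert x v_n^* \Vert = \Vert x \Vert$; and then the proof of Theorem 2.4 in \cite{BN} shows $v$ is a unitary --- no witness transport and no appeal to Theorem \ref{uospr} at all. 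Your route could be repaired, but only by adding the same two missing ingredients: (i) the displayed upper bound, and (ii) a lemma saying that $\Vert v_n v_n^* + x x^* \Vert < 2$ for norm-one $x$ implies $\Vert v_n^* x \Vert < 1$ (if $\Vert v_n^* x \Vert = 1$, take unit vectors $\xi_k$ with $\Vert v_n^* x \xi_k \Vert \to 1$; then $\Vert x \xi_k \Vert \to 1$, and testing $v_n v_n^* + x x^*$ against the unit vectors $x \xi_k / \Vert x \xi_k \Vert$ gives norm at least $2$). With (i) and (ii) one gets $\Vert t^v_{cx} \Vert^2 \leq 1 + c^2 + c \Vert v_n^* x \Vert < 1 + c$ for small $c > 0$, the violation you wanted; but neither ingredient appears in your write-up.
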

\begin{proof}  The one direction is Lemma \ref{1x01l}.
If the norm condition holds then for all $x \in {\rm Ball}(M_n(X))$
(of small norm), $$1 + \Vert
x \Vert_n \leq \Vert t_x \Vert^2 = \Vert t_x^* t_x \Vert \leq 1 + \Vert x \Vert^2
+ \Vert v^* x \Vert,$$
where we are writing $v_n$ as $v$ for brevity.
Write $x = cy$ where $c > 0$ and $\Vert y \Vert = 1$, then
$c \leq c^2 + c \Vert v^* y \Vert$, so that $1 \leq c + \Vert v^* y \Vert$.
Hence $1  \leq \Vert v^* y \Vert$ if $\Vert y \Vert = 1$ (letting $c \searrow 0$).
This implies that $\Vert v^* x \Vert = \Vert x \Vert$ for all $x \in M_n(X)$.
Similarly, by using
$t_x t_x^*$ in the calculation above, we have $\Vert x v^* \Vert = \Vert x \Vert$.
Now by the proof of Theorem 2.4 in \cite{BN} we see that $v$ is a unitary in $X$.
\end{proof}

{\bf Remark.}  It is not enough that $\Vert t^v_x \Vert
  \geq \sqrt{2}$ if $\Vert x \Vert_n = 1$; this does not
characterize unital operator spaces.  To see this take $X = H^c$,
Hilbert column space.

   \begin{theorem} \label{tc3}   If $X$ is
an operator space and $v \in {\rm Ball}(X)$ then $(X,v)$
is a unital operator space iff $\max \{ \Vert v_n + i^k x \Vert : k = 0,1,2,3 \}
 \geq \sqrt{1 + \Vert x \Vert_n}$ for all $x \in M_n(X)$ of small norm,
and all $n \in \Ndb$.
\end{theorem}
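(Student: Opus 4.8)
The plan is to reduce both implications to the already-established Theorem \ref{tcl}, exploiting two elementary facts about the quantity $\max_k \Vert v_n + i^k x \Vert$: that it dominates $\Vert t^v_x \Vert$ in general, and that it collapses \emph{exactly} to $\Vert t^v_x \Vert$ when $x$ is fed into the criterion as an off-diagonal corner at twice the matrix level.

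For the forward implication I would first record the general inequality $\max \{ \Vert v_n + i^k x \Vert : k = 0,1,2,3 \} \geq \Vert t^v_x \Vert$, valid for every operator space $X$, every $v \in X$, and every $x \in M_n(X)$. To see this, form the $4 \times 4$ block circulant $C$ over $M_n(X)$ with first block row $(v_n, x, 0, 0)$. Conjugating by the scalar block-Fourier unitary $F \otimes I_n$ (a complete isometry) block-diagonalizes $C$ to $\mathrm{diag}(v_n + x, v_n + ix, v_n - x, v_n - ix)$, so that $\Vert C \Vert = \max_k \Vert v_n + i^k x \Vert$. Since $t^v_x$ is the compression of $C$ to its first two block rows and columns, and compression by contractive scalar matrices does not increase the norm, $\Vert t^v_x \Vert \leq \Vert C \Vert$. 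Combining this with Lemma \ref{1x01l}, which gives $\Vert t^v_x \Vert \geq \sqrt{1 + \Vert x \Vert_n}$ whenever $(X,v)$ is a unital operator space, settles the forward direction.

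For the converse the key step is the collapse of the four rotations. Fix $n$ and a small $x \in M_n(X)$, and consider at level $2n$ the element $x' = \left[ \begin{array}{cc} 0 & x \\ 0 & 0 \end{array} \right] \in M_{2n}(X)$, which has $\Vert x' \Vert = \Vert x \Vert$. Then $v_{2n} + i^k x' = t^v_{i^k x}$, and since $\Vert t^v_{\alpha x} \Vert = \Vert t^v_x \Vert$ for every unimodular scalar $\alpha$ (conjugate $t^v_{\alpha x}$ by a suitable diagonal unitary $\mathrm{diag}(\beta_1, \beta_2)$ to absorb the phase), all four quantities $\Vert v_{2n} + i^k x' \Vert$ equal $\Vert t^v_x \Vert$, whence their maximum is precisely $\Vert t^v_x \Vert$. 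Applying the hypothesis of the theorem at level $2n$ to $x'$ therefore gives $\Vert t^v_x \Vert \geq \sqrt{1 + \Vert x \Vert_n}$. As this holds for all small $x$ at every level $n$, it is exactly the hypothesis of Theorem \ref{tcl}, which yields that $(X,v)$ is a unital operator space.

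The one real subtlety, and the step to get right, is this converse reduction: one must feed $x$ into the criterion not at its own level but as an off-diagonal corner at level $2n$, which is what forces the otherwise loss-prone four-term maximum down to the single quantity $\Vert t^v_x \Vert$. A naive level-$n$ attack—embedding $X$ in its ternary envelope, bounding each $\Vert v_n + i^k x \Vert^2$ above by $1 + \Vert x \Vert^2$ plus an off-diagonal term, and letting $\Vert x \Vert \searrow 0$—recovers only $\Vert v^* x \Vert \geq \tfrac{1}{2}\Vert x \Vert$, owing to the $\sqrt{2}$ loss incurred in passing between a number and its real and imaginary parts; this is too weak to invoke the argument behind Theorem 2.4 of \cite{BN}. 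The level-$2n$ trick circumvents the estimate altogether.
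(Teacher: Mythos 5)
Your proof is correct. The ($\Leftarrow$) direction coincides exactly with the paper's: the paper too applies the hypothesis at level $2n$ to the matrix with $x$ in the $1$-$2$ corner and zeros elsewhere, notes that $\Vert t^v_{i^k x} \Vert = \Vert t^v_x \Vert$, and invokes Theorem \ref{tcl}. Your ($\Rightarrow$) direction, however, is a genuinely different argument. Writing $v = 1$, the paper computes via Lemma \ref{abbal} and (\ref{1x01}) that the $4 \times 4$ block matrix $\left[ \begin{array}{cc} a & b \\ b & a \end{array} \right]$, with $a = \left[ \begin{array}{cc} 1 & x \\ x & 1 \end{array} \right]$ and $b = \left[ \begin{array}{cc} 1 & x \\ -x & 1 \end{array} \right]$, has norm $\max \{ 2 \Vert t_x \Vert , 2 \Vert x \Vert \} \geq 2 \sqrt{1 + \Vert x \Vert_n}$, and then bounds that same norm above by $\Vert a \Vert + \Vert b \Vert \leq 2 \max_k \Vert v_n + i^k x \Vert$ using the triangle inequality together with Lemmas \ref{abbal} and \ref{ambbal}. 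You instead isolate the key inequality $\max_k \Vert v_n + i^k x \Vert \geq \Vert t^v_x \Vert$ as an exact identity plus a compression: Fourier conjugation gives $\Vert C \Vert = \max_k \Vert v_n + i^k x \Vert$ for the block circulant $C$ (legitimate, since multiplication by scalar unitary matrices is isometric on matrices over any operator space, and block-diagonal norms are maxima of block norms), and $t^v_x$ is the upper-left $2 \times 2$ block compression of $C$; Lemma \ref{1x01l} then finishes as in the paper. The two routes are cousins --- Lemma \ref{abbal} is precisely the $2 \times 2$ case of your circulant diagonalization --- but yours buys a cleaner intermediate statement, valid for arbitrary $v$ in an arbitrary operator space with the only loss being the single compression, whereas the paper trades that exactness for staying within its two already-established $2 \times 2$ lemmas at the cost of a triangle-inequality step. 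Your closing remark on the naive level-$n$ attack (which recovers only $\Vert v^* y \Vert \geq 1/2$ for unit vectors $y$) correctly diagnoses why the level-$2n$ corner trick is the essential step of the converse, and it matches the design of the paper's proof.
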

\begin{proof}  ($\Leftarrow$) \  Apply Theorem \ref{tcl}, replacing
$x$ two lines above by the $2 \times 2$ matrix with $x$ in the $1$-$2$ corner and
zeroes elsewhere, and $v_n$ there by $v_{2n}$, noting that
$\Vert t_{i^k x} \Vert = \Vert t_x \Vert$.

($\Rightarrow$) \ By (\ref{abba}) and (\ref{1x01}), we have
$$\left\| \left[ \begin{array}{ccccl} 1 & x & 1 & x \\
x & 1 & -x & 1 \\
1 & x & 1 & x \\
-x & 1  &  x & 1 \end{array}
 \right] \right\| = \max \{ \Vert \left[ \begin{array}{ccl} 2 & 2 x \\
0 & 2 \end{array}
 \right] \Vert , \Vert \left[ \begin{array}{ccl} 0 & 0 \\ 2x & 0 \end{array}
 \right] \Vert \} \geq 2 \sqrt{1 + \Vert x \Vert_n} .$$
However the norm of the big matrix here is also $$\leq
 \Vert \left[ \begin{array}{ccl}
1 & x \\ x & 1 \end{array}
 \right] \Vert + \Vert \left[ \begin{array}{ccl} 1 & x \\ -x & 1 \end{array}
 \right] \Vert \leq 2 \max \{  \Vert v_n + i^k x \Vert : k = 0,1,2,3 \},$$
 by (\ref{abba}) and (\ref{ambba}).  \end{proof}

{\bf Remark.}  One may replace $i^k$ in the theorem by the set of
unimodular complex scalars (the proof is unchanged).  There is also an equivalent
rewriting of the `max' condition in the last theorem
 in terms of the cone ${\mathfrak F}_X$ highlighted in \cite{BRead}.
If $z \in {\mathfrak F}_{M_n(X)}$ let $c_k(z) = v_n + i^k(v_n-z) \in {\mathfrak F}_{M_n(X)}, k = 0,1,2,3$.
The condition then becomes: $\max \{ \Vert c_k(z)  \Vert : k = 0,1,2,3 \} \geq \sqrt{1 + \Vert v_n-z \Vert}$,
for all $z \in {\mathfrak F}_{M_n(X)}$.

\medskip

We give some illustrations of the use of this criterion in practice. 

\medskip

{\bf Examples 1).}    To see immediately that $c_0$ is not a unital operator space: For any norm $1$ element $\vec x \in c_0$, if $|x_n|$ is small enough then clearly $||\vec x + i^k  \vec e_n ||$ does
not dominate $\sqrt{2}$ for any $k$  (here $(\vec e_n)$ is the standard  basis).  So by  Theorem \ref{intro},  
 $c_0$  is not a  unital operator space (with any operator space structure $\{ \Vert \cdot \Vert_n \}_{n \geq 2}$).

\smallskip

{\bf 2). } A convincing and more nontrivial example is $S^1_2$, namely $M_2$ with the trace norm.  This example is also interesting 
because its `commutative variant' $\ell^1_2$  is well known to be a unital operator space, as  indeed also is $\ell^1$
and more generally  various `Fourier algebras' $B(G)$ and their noncommutative variants (see \cite[Section 3]{BM} and its methods).
Suppose $a \in M_2$ with trace$(|a|) = 1$.    Then there exist unitaries $u, v$ with $a = u d v$, where $d$ is
a diagonal matrix with positive entries $\alpha, \beta$ with $\alpha + \beta = 1$.  Let $x = t  u e_{21} v$, where $e_{21}$ is the usual matrix unit, and $t$ is a scalar.  Then 
$\Vert a + i^k x \Vert_1 = \Vert d + i^k t \,   e_{21} \Vert_1$.   Let $b = d +  i^k t \, e_{21}$, and by way of contradiction, assume
trace$(|b|) \geq \sqrt{1 + t}$ for small $t > 0$.  Let  $c =b^*b$, and 
suppose that the eigenvalues of $c$ are $r,s$.  Then trace$(c) = r + s$, and  trace$(|b|) = \sqrt{r}+ \sqrt{s}$.
Since trace$(|b|) \geq \sqrt{1 + t}$ we have $r + s + 2 \sqrt{rs}  \geq 1 + t$.  However, since $c$ has rows
$(\alpha^2 + t^2 , \overline{i^n} t \beta)$ and $(i^n t \beta, \beta^2)$, 
we see that trace$(c) = r + s = \alpha^2 +  \beta^2 + t^2$, and det$(c) = rs = (\alpha \beta)^2$.
Thus $$1 + t \leq \alpha^2 +  \beta^2 + t^2 + 2 \alpha \beta = (\alpha + \beta)^2 + t^2  = 1 + t^2,$$
a contradiction for small $t > 0$.  By Theorem \ref{tc3}, $a$ is not unitary, and  so $S^1_2$ is not a  unital operator space
(with any operator space structure $\{ \Vert \cdot \Vert_n \}_{n \geq 2}$).  This example also  illustrates one great advantage of this characterization over other ones: the criterion involves a linear combination of $u$ and $x$ rather than 
a matrix with these as entries.   Indeed we only needed $1 \times 1$ matrices in the computation above.

\smallskip

{\bf 3). }   By the same argument,  the 3 dimensional subspace $L^1_2$ of lower triangular matrices in $S^1_2$,
and its two dimensional subspace with the diagonal scalar  repeated, are not  unital operator spaces.
Note that $L^1_2$ and $S^1_2$, contain a two dimensional unital operator space,
namely the diagonal, which is a copy of $\ell^1_2$ (a unital operator space as we said above).  This gives a  glimpse 
of the delicacy of the process of adding an element or two to an operator space and trying to keep the space unital.

\begin{proposition} \label{2s} Let $X$ be an operator space with element $e \in {\rm Ball}(X)$.
Inside $M_2(X)$ consider the set ${\mathcal U}_e(X)$ of
matrices $\left[ \begin{array}{cl} \lambda e & x \\ 0 &  \lambda e\end{array} \right]$
 for $x \in X,$ and $\lambda$ scalar.
Then  $(X,e)$ is a unital operator space iff
$({\mathcal U}_e(X), e \otimes I_2)$ is a unital
 operator space, and iff
$({\mathcal U}_e(X), e \otimes I_2)$ is a unital operator algebra
with the canonical product. \end{proposition}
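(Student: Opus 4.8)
The plan is to prove the cycle $(1)\Rightarrow(3)\Rightarrow(2)\Rightarrow(1)$, writing $(1)$, $(2)$, $(3)$ for the three asserted conditions. Throughout set $w = e \otimes I_2$ and write a typical element of $\mathcal{U}_e(X)$ as $\Phi(\lambda,x) = \left[\begin{smallmatrix} \lambda e & x \\ 0 & \lambda e\end{smallmatrix}\right]$, so that the canonical product is $\Phi(\lambda,x)\Phi(\mu,y)=\Phi(\lambda\mu,\lambda y+\mu x)$, with identity $w=\Phi(1,0)$, and note that $\Vert w\Vert=\Vert e\Vert\le 1$ so that $w\in {\rm Ball}(\mathcal{U}_e(X))$.

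For $(1)\Rightarrow(3)$ I would fix a complete isometry $T:X\to B(H)$ with $T(e)=I_H$ and amplify it to the complete isometry $M_2(X)\to M_2(B(H))=B(H\oplus H)$. This carries $\Phi(\lambda,x)$ to $\left[\begin{smallmatrix}\lambda I & T(x)\\ 0 & \lambda I\end{smallmatrix}\right]$, and a direct computation shows that the operator product of two such matrices lands back in the range and agrees there with the image of the canonical product; since the range also contains the identity of $B(H\oplus H)$ (the image of $w$), the amplified map restricts to a completely isometric unital homomorphism of $(\mathcal{U}_e(X),w)$ onto a unital subalgebra of $B(H\oplus H)$. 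Hence $(\mathcal{U}_e(X),w)$ is a unital operator algebra with the canonical product. The implication $(3)\Rightarrow(2)$ is immediate, since every unital operator algebra is a unital operator space \cite{BLM}.

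The substance lies in $(2)\Rightarrow(1)$, which I would deduce from the ``max'' criterion of Theorem \ref{tc3} applied to $\mathcal{U}_e(X)$, combined with Theorem \ref{tcl} for $(X,e)$. The key is a bookkeeping identification: for $y\in M_n(X)$, under the canonical shuffle $M_n(M_2(X))\cong M_2(M_n(X))$ the element $w_n+i^k\tilde y$ of $M_n(\mathcal{U}_e(X))$ — where $w_n$ is the diagonal ampliation of $w$ and $\tilde y=[\Phi(0,y_{ij})]$ — corresponds precisely to $t^e_{i^k y}=\left[\begin{smallmatrix} e_n & i^k y\\ 0 & e_n\end{smallmatrix}\right]$. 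Because the shuffle is a complete isometry, this yields $\Vert w_n+i^k\tilde y\Vert=\Vert t^e_{i^k y}\Vert$ and $\Vert\tilde y\Vert=\Vert y\Vert$. Conjugating $t^e_y$ by the scalar diagonal unitary $\mathrm{diag}(1,\mu)\otimes I_n$ with $|\mu|=1$ moreover shows $\Vert t^e_{\mu y}\Vert=\Vert t^e_y\Vert$ for all unimodular $\mu$, in particular for $\mu=i^k$. Now assuming $(2)$, Theorem \ref{tc3} applied with $z=\tilde y$ of small norm gives $\max_k\Vert w_n+i^k\tilde y\Vert\ge\sqrt{1+\Vert\tilde y\Vert}$, which by the preceding identities reads $\Vert t^e_y\Vert\ge\sqrt{1+\Vert y\Vert}$. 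As $n$ and $y\in M_n(X)$ of small norm are arbitrary, Theorem \ref{tcl} then yields that $(X,e)$ is a unital operator space.

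I expect the shuffle identification in the last paragraph to be the only delicate step: one must check carefully that the diagonal-plus-corner element of $M_n(\mathcal{U}_e(X))$ matches $t^e_y$ after shuffling, and that the relevant matrix norms are preserved along the way. Everything else is either a routine matrix computation or a direct appeal to the characterizations already established.
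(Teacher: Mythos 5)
Your proof is correct, but your key implication takes a genuinely different route from the paper's. For the crucial direction --- that $({\mathcal U}_e(X), e \otimes I_2)$ being a unital operator space forces $(X,e)$ to be one --- the paper never touches Theorems \ref{tcl} or \ref{tc3}: it observes that the column in $M_{2,1}({\mathcal U}_e(X))$ with entries $e \otimes I_2$ and $\Phi(0,x)$ (in your notation) must have norm $\sqrt{1 + \Vert x \Vert^2} = \sqrt{2}$ when $\Vert x \Vert = 1$, while the same element viewed as a concrete $4 \times 2$ matrix over $X$ has norm $\Vert [e \; \; x]^t \Vert$; repeating this at all matrix levels yields the column-norm identity of the main theorem of \cite{BN} (cf.\ Theorem \ref{uospr}), so $e$ is an isometry in $X$, and symmetrically (using rows) a coisometry, hence a unitary. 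You instead feed the hypothesis through Theorem \ref{tc3} applied in ${\mathcal U}_e(X)$, together with the shuffle identity $\Vert w_n + i^k \tilde y \Vert = \Vert t^e_{i^k y} \Vert = \Vert t^e_y \Vert$, so as to land exactly on the hypothesis of Theorem \ref{tcl} for $(X,e)$; your shuffle and unimodular-conjugation steps check out, and your choice of the `max' criterion --- rather than, say, applying Lemma \ref{1x01l} inside ${\mathcal U}_e(X)$, which would produce matrices of matrices --- is precisely what keeps that bookkeeping clean. As for what each approach buys: the paper's argument is shorter and handles the isometry and coisometry halves symmetrically, but it leans directly on the external theorem of \cite{BN}; yours stays entirely within the criteria established in Section 3 of this paper and incidentally shows how Theorems \ref{tcl} and \ref{tc3} interact. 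Your remaining implications (unital operator space $\Rightarrow$ unital operator algebra with the canonical product $\Rightarrow$ unital operator space) are exactly the part the paper dismisses as `the rest is obvious', and your spelled-out version of them is fine.
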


\begin{proof}  Suppose that $({\mathcal U}_e(X), e \otimes I_2)$ is a unital
operator space.  If $x  \in X, \Vert x \Vert = 1$, then
$$\sqrt{2} = \left| \left| \left[ \begin{array}{ccl} e & 0 \\
0 & e \\
0 & x \\
0 & 0 \end{array} \right] \right| \right| =
\left| \left| \left[ \begin{array}{cl} e \\ x \end{array} \right] \right| \right| .$$
Similarly for matrices, and so by the main theorem in \cite{BN},
$e$ is an isometry.  Similarly
$e$ is a coisometry, so $(X,e)$ is a unital operator space.  The rest is obvious.
\end{proof}

As pointed out in \cite{BN}, any theorem characterizing
unital operator spaces `linear-metrically', is also
a characterization of unitaries in $X$, that is
of elements of $X$ that are a unitary in some TRO containing 
$X$.   We give a
slight refinement of the main result in \cite{BN}, which
we will need later:

\begin{theorem} \label{uospr}  Let $X$ be 
an operator space, and fix $m,n \in \Ndb$.  An element $u \in M_{mn}(X)$ 
is a coisometry (resp.\ an isometry)
in $M_{mn}(X)$ (in the sense defined at the end of the introduction) 
iff $\Vert [ u_k 
\; \; x ] \Vert^2 = 1 + \Vert x \Vert^2$ (resp.\   $\Vert [ u_k \; \; x
]^t \Vert^2 = 1 + \Vert x \Vert^2$), for all $k
 \in \Ndb$ and $x \in M_{km}(X)$ (resp.\   $x \in M_{kn}(X)$).
 Indeed, it suffices to consider norm one matrices $x$ here.
 \end{theorem}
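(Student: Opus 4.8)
The plan is to prove the coisometry case in detail; the isometry case is identical after transposing (replacing rows by columns) and replacing $ZZ^*$ by $Z^*Z$. Throughout I embed $M_{mn}(X)$ completely isometrically into a TRO $Z$, which by the definition recalled in the introduction may be taken to be the ternary envelope ${\mathcal T}(M_{mn}(X))$, so that $A := ZZ^*$ is a $C^*$-algebra generated (as a $C^*$-algebra) by the products of entries of $M_{mn}(X)$, with identity $1_{ZZ^*}$. Write $a = u u^* \in A_+$, and note that $u_k = u \otimes I_k$ gives $u_k u_k^* = a \otimes I_k$. Then the row-norm identity valid in any TRO reads $\Vert [u_k \; x] \Vert^2 = \Vert u_k u_k^* + x x^* \Vert = \Vert a \otimes I_k + x x^* \Vert$ for every $x$ of the size making the block row defined. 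The forward direction is then immediate: if $u$ is a coisometry then $a = 1_{ZZ^*}$, so $a \otimes I_k$ is the identity of $M_k(A)$; since $xx^* \geq 0$ and adding the unit to a positive element simply adds the norm, $\Vert a \otimes I_k + x x^* \Vert = 1 + \Vert x x^* \Vert = 1 + \Vert x \Vert^2$, with no restriction on $\Vert x \Vert$ needed.

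For the converse I must show the family of norm equalities forces $a = 1_{ZZ^*}$, working throughout with only norm one $x$ (which simultaneously yields the claim that norm one suffices). Fixing a unit $x$, the hypothesis gives $\Vert a \otimes I_k + x x^* \Vert = 2$. Since $\Vert a \Vert = \Vert u \Vert^2 \leq 1$ (with $\Vert u \Vert = 1$ falling out) and $\Vert x x^* \Vert = 1$, attaining the value $2 = \Vert a \otimes I_k \Vert + \Vert x x^* \Vert$ is a strong alignment condition: passing to a state $\psi$ on $M_k(A)$ with $\psi(a \otimes I_k + x x^*) = 2$ forces both $\psi(a \otimes I_k) = 1$ and $\psi(x x^*) = 1$, i.e. a single state that peaks $a$ and norms $x x^*$ at once. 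Scaling $x \mapsto tx$ and using this common peaking state recovers the unrestricted equality $\Vert a \otimes I_k + x x^* \Vert = 1 + \Vert x \Vert^2$, which is where the ``norm one suffices'' refinement comes from.

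The key reduction is then that $a \in A_+$ with $\Vert a \Vert = 1$ satisfies $\Vert a + b \Vert = 1 + \Vert b \Vert$ for \emph{all} $b \in A_+$ (and all matrix amplifications). To reach every such $b$ from the prescribed test family I use that the positive elements of the form $x x^*$, with $x \in M_{km}(X)$, are closed under finite sums by concatenating $x_1, \dots, x_r$ into a single wider row (which is exactly why the statement ranges over all $k \in \Ndb$): such sums $\sum_i x_i x_i^*$ are norm dense in $A_+$ because $Z$ is generated as a TRO by $M_{mn}(X)$, so $A = ZZ^*$ is generated as a $C^*$-algebra by the corresponding products. By continuity the equality $\Vert a + b \Vert = 1 + \Vert b \Vert$ then holds for all $b \in A_+$. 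Finally, a positive contraction with this property must be the identity: choosing for each $b$ a state peaking $a$ and norming $b$, the peaking states have cyclic vectors fixed by $a$ (since $0 \leq a \leq I$ and $\langle a \xi, \xi\rangle = \Vert \xi \Vert^2$ forces $a\xi = \xi$) which norm all of $A_+$, forcing $a = 1_{ZZ^*}$. By the definition of coisometry and the universal property of the ternary envelope, this says precisely that $u$ is a coisometry in $M_{mn}(X)$.

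I expect the main obstacle to be this last algebraic passage from the collection of scalar norm equalities to the single statement $a = 1_{ZZ^*}$, namely verifying that the restricted test family (rows $[u_k \; x]$ of the prescribed shapes, over all $k$) is simultaneously rich enough to norm every positive direction and to approximate every positive element of $A$. This is exactly the delicate part carried out in the main theorem of \cite{BN}, whose argument I would follow, the present contribution over \cite{BN} being the bookkeeping that allows $u$ to lie in $M_{mn}(X)$ for fixed $m,n$ and that reduces the test set to norm one matrices.
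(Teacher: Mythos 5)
Your forward direction is fine, and your reduction of the converse to the single statement $uu^* = 1_{ZZ^*}$ is the right target, but the mechanism you propose for reaching it has a genuine gap. The density claim at its heart is false: sums $\sum_i x_i x_i^*$ with $x_i$ matrices over $X$ all lie in (matrices over) the \emph{closed span of} $XX^*$, i.e.\ of products of length two, whereas $A = ZZ^*$ is the closed span of products of all even lengths $x_1x_2^*x_3x_4^*\cdots$; generating $A$ as a $C^*$-algebra lets you multiply, and multiplying creates positive elements that are not approximable by such sums. Concretely, take $X = \mathrm{span}\{1,z\} \subset C(\mathbb{T})$, whose ternary (here $=C^*$-) envelope is $C(\mathbb{T})$: your cone lies inside $\mathrm{span}\{1,z,\bar z\} \cap C(\mathbb{T})_+$, which is nowhere near dense in $C(\mathbb{T})_+$, even though $u=1$ \emph{is} a coisometry here --- so the argument breaks even on examples where the theorem holds. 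Moreover, even on the cone you can legitimately reach, the alignment you extract (``for each $b$ some state peaks $a$ and norms $b$'') is too weak: in the commutative picture it just says $a$ and $b$ attain their norms at a common point, which does not force $ab = b$. Your own closing argument needs to be fed elements of the form $(1-a)b(1-a)$, and these are exactly the kind of elements \emph{not} in your cone; so the two halves of your strategy (what you can reach, and what you need to test against) do not meet. There is also a smaller slip: concatenating $x_1,\dots,x_r$ into a wider row produces a $km \times rkm$ matrix, which is not an admissible test matrix; one must instead embed $\sum_i x_ix_i^*$ as a corner of a square matrix in $M_{rkm}(X)$ paired with $u_{rk}$ --- fixable, but symptomatic of the bookkeeping this theorem is about.

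The paper avoids all of this by a different device, and your final sentence deferring to the main theorem of \cite{BN} in fact points to that \emph{other} argument, not to a repair of yours: set $c = (uu^*)^{1/2}$, and use the norm hypothesis (via a $2\times 2$ estimate on $[c \; \, x]^*[c \;\, x]$, whose off-diagonal entry is $cx$) to get $\Vert cx \Vert = \Vert u^* x\Vert = \Vert x \Vert$ for all matrices $x$ over $X$. Thus left multiplication by $c$ is isometric on matrices over $X$, hence --- by the rigidity/essentiality of the ternary envelope --- isometric on all of $Z$; then \cite[Theorem 2.1]{BN} yields that $c$ is a coisometry, and since $c \geq 0$, uniqueness of positive square roots forces $c = 1$ and $uu^* = 1$. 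It is this multiplier-plus-rigidity step that substitutes for the density you assumed; without it (or something equivalent), the passage from the scalar norm identities to $a = 1_{ZZ^*}$ does not go through.
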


\begin{proof}   We just sketch this, since
it is similar to the proof of the main theorem in \cite{BN}, which 
the reader might follow along with.  It also uses 
facts about the ternary envelope of $M_{mn}(X)$ from 
e.g.\ \cite{Ham} or
\cite[Appendix A.13 (ii)]{BSh}, such as if  $Z$ is the ternary envelope of
$X$, then $M_{mn}(Z)$ is the
ternary envelope of $M_{mn}(X)$.
  We just prove the coisometry case, the other is
similar.  Let $c = (u u^*)^{\frac{1}{2}} \in M_m(Z^* Z)$.
Then $\Vert c x \Vert = \Vert u^* x \Vert = 1$ if 
$x \in M_m(X)$ with $\Vert x \Vert = 1$,   as in 
the proof we are following.
As in that proof, 
left multiplication by $c$ on $M_m(Z)$ is an isometry, since it 
restricts to an isometry on $M_m(X)$.  
By \cite[Theorem 2.1]{BN}, $c$ is a coisometry in 
$M_m(Z^* Z)$. Since $c \geq 0$ and $u u^* \geq 0$,
 by the unicity of positive square roots we must have $c = I$, 
and $u u^* = I$.
Thus $u$ is a coisometry. 
\end{proof}
 
We next  relate unital operator spaces to our previous paper
\cite{BNc}, by characterizing compact projections
in a $C^*$-algebra in terms of unital operator spaces.
We first mention some background facts from \cite{ORPI}.
Let $p$ be an open projection in the sense of Akemann \cite{Ake,Ake2}
or \cite{BHN},
 in the bidual of an approximately
 unital operator  algebra $A$, and let $q = 1-p$.  We recall that
$q$ is compact iff $q = aq$ for some $a \in {\rm Ball}(A)$.  We write
$A_p = \{ a \in A : a = ap \}$ and $_pA =  \{ a \in A : a = pa \}$.
It is easy to see that  the bidual of  $X = A/(_pA + A_p)$ is
 the unital operator  algebra  $qA^{**}q$.  Indeed
 consider the complete quotient map $x \mapsto q x q$
from $A^{**}$ onto $qA^{**}q$.  Its kernel is easily seen to be
$p A^{**} + A^{**} p$.  In particular the latter space
is weak* closed.   Thus
$A^{**}/(p A^{**} + A^{**} p) \cong qA^{**}q$ completely
isometrically.   Next, note that the weak* closure of
$_pA + A_p$ equals $p A^{**} + A^{**} p$ (using the
fact that  the latter space
is weak* closed).   Thus we have
$$(A/(_pA + A_p))^{**} \cong A^{**}/(p A^{**} + A^{**} p)
\cong qA^{**}q$$ completely isometrically.

 \begin{proposition}  Suppose that $B$ is a $C^*$-algebra, and that
$q = 1-p$ is a closed
projection in $B^{**}$.
  Then $q$ is compact if and only if $X = B/(_pB + B_p)$  is a
 unital operator space (i.e.\ iff it possesses a unitary in $X$
in the sense of the introduction).
\end{proposition}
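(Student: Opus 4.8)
The plan is to run everything through the identification, recalled just before the statement, of $X^{**}$ with the corner $qB^{**}q$. As a corner of the von Neumann algebra $B^{**}$, this is itself a von Neumann algebra with unit $q$, and under the canonical embedding $X \hookrightarrow X^{**}$ the space $X$ becomes the subspace $\{ qbq : b \in B \}$ of $qB^{**}q$ (a selfadjoint subspace, though not a subalgebra, since the map $b \mapsto qbq$ is not multiplicative). Because this embedding is a complete isometry onto a Banach space, its image is closed and consists exactly of the elements $qbq$; hence the whole proposition reduces to the single assertion that $q$ is compact iff $q \in X$, i.e.\ iff $q = qbq$ for some $b \in B$. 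Throughout I would use the theory of compact projections from \cite{ORPI} and \cite{BNc}, in particular that a closed projection $q$ is compact iff there is a positive $a \in {\rm Ball}(B)$ with $aq = qa = q$.

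For the forward direction (compact $\Rightarrow$ unital), pick such a positive $a \in {\rm Ball}(B)$ with $aq = qa = q$; then $qaq = q$, so $q$, the unit of the $C^*$-algebra $qB^{**}q = X^{**}$, lies in $X$. Restricting a faithful unital $*$-representation of $qB^{**}q$ on a Hilbert space to $X$ then exhibits $(X,q)$ as a unital operator space, so $X$ possesses a unitary and this direction is complete.

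For the converse (unital $\Rightarrow$ compact), let $u \in X$ be a unitary. First I would show that $u$ is a unitary of the $C^*$-algebra $qB^{**}q$, that is $uu^* = u^*u = q$: passing to the bidual, $(X^{**},u)$ is again a unital operator space with the same unit, and the unitaries of a $C^*$-algebra viewed as an operator space coincide with its $C^*$-algebraic unitaries (its ternary envelope being itself). Next, writing $u = q\tilde u q$ with $\tilde u \in B$ and $\Vert \tilde u \Vert < 1 + \varepsilon$ by the quotient-norm description, the Kadison--Schwarz inequality for the unital completely positive compression $b \mapsto qbq$ gives $q \le q \tilde u \tilde u^* q \le (1+\varepsilon)^2 q$ in $qB^{**}q$. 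Thus the positive element $c_\varepsilon = \tilde u \tilde u^* / (1+\varepsilon)^2 \in {\rm Ball}(B)$ satisfies $\Vert q - q c_\varepsilon q \Vert \to 0$ as $\varepsilon \searrow 0$; that is, $q$ is approximated in norm by compressions to it of positive contractions drawn from $B$.

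The main obstacle is precisely this last step: converting the approximate domination of $q$ by honest elements of $B$ into genuine compactness, namely the existence of a single $a \in {\rm Ball}(B)$ with $aq = q$. This is the ``from $B^{**}$ back to $B$'' upgrade at the heart of Akemann's theory; the soft operator-space input only produces the unitary relation in $B^{**}$ (and the data in norm-limit form), so I expect to invoke the characterizations of compact projections from \cite{ORPI} and \cite{BNc} to finish. That the difficulty is essential, and not an artifact of the argument, is already visible for $B = \Kdb(H)$: a closed but non-compact (hence infinite-rank) projection cannot be compressed even approximately to its unit by a compact contraction, which is exactly the obstruction the theorem must surmount.
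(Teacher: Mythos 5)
Your forward direction is fine and matches the paper's. The first step of your converse is also essentially correct: the paper reaches the same conclusion $i(u)i(u)^* = q$ by a slightly different route (via \cite{BHN}, Lemma 5.3, which identifies the TRO generated by $i(X)$ inside $qB^{**}q$ as a ternary envelope of $X$, then \cite{BN}, Lemma 2.3, and weak* density of $i(X)$ in $qB^{**}q$), but your bidual/ternary-envelope argument delivers the same fact. (Incidentally, your opening ``reduction'' of the proposition to ``$q$ is compact iff $q = qbq$ for some $b \in B$'' is overstated: the implication ``$X$ unital $\Rightarrow q \in qBq$'' is not justified there and is essentially the hard direction; fortunately your subsequent argument does not actually use this reduction.)

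The genuine gap is exactly where you locate it, and it is fatal to your route: from lifts $\tilde u$ with $\Vert \tilde u \Vert < 1+\varepsilon$ you only extract approximate domination, $\Vert q - q c_\varepsilon q \Vert \to 0$, and no soft argument upgrades this to the existence of a single $a \in {\rm Ball}(B)$ with $aq=q$ (a weak* cluster point of the $c_\varepsilon$ lies only in ${\rm Ball}(B^{**})_+$, not in $B$, which is vacuous since $q$ itself is such a cluster point); nor is ``closed plus approximately dominated implies compact'' among the facts recalled in the paper. The missing ingredient is Kirchberg's theorem, cited in the paper as the remark after Corollary 1.3 of \cite{ORPI}: the quotient map $B \to B/({}_pB + B_p)$ carries ${\rm Ball}(B)$ \emph{onto} ${\rm Ball}(X)$, so the unitary $v$ admits an \emph{exact} contractive lift $a \in {\rm Ball}(B)$ with $qaq = i(v)$. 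With $\Vert a \Vert \le 1$ the computation becomes rigid: $qa^*aq = qa^*qaq + qa^*(1-q)aq = q + qa^*(1-q)aq$, and since $\Vert qa^*aq \Vert \le 1$ while $\Vert q + h \Vert = 1 + \Vert h \Vert$ for positive $h \in qB^{**}q$, one concludes $(1-q)aq = 0$; hence $q = (aq)^*(aq) = q(a^*a)q$, and since $0 \le a^*a \le 1$ this forces $(a^*a)q = q$, i.e.\ $q$ is compact. Your $\varepsilon$-lifts lose exactly the norm rigidity that makes this argument close, and ``invoking the characterizations of compact projections'' to pass from approximate to exact domination is not a finish but a restatement of the remaining difficulty.
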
 \begin{proof}  Let $i: X \to qB^{**}q$ be the canonical complete isometry induced by the canonical map from $X$ into its bidual,
and the identification in the last centered line above the proposition.  Explicitly,
$i([a]) = qaq$ for $a \in A$.

For one direction of the result, if $q$ is compact, so that
$q = aq$ for an $a \in {\rm Ball}(A)$,
let $e = [a] = a + (_pA + A_p) \in X$, and note that
 $i(e)$ is the identity $qaq = q$ of $qA^{**}q$.
So $X$ is a unital operator space.

For the other direction,
by \cite[Lemma
5.3]{BHN}, the TRO $Z$ generated by $i(X)$ inside $qB^{**}q$ is
a  ternary envelope of $X$, so that $i(v)$ is a coisometry in $Z$
by \cite[Lemma 2.3]{BN}, where
$v$ is the identity of $X$.
Thus $i(v)$ is a partial isometry in $qB^{**}q$.
Also,
$i(v) i(v)^* i(x)  = i(x)$ since $i(x) \in Z$ and $i(v)$ is a coisometry in
$Z$.
By weak* density of
$i(X)$ in $qB^{**}q$ we have $i(v) i(v)^* = q$.
Next note that by a result of Kirchberg (see the
remark after Corollary 1.3
of \cite{ORPI}), there exists $a \in {\rm Ball}(B)$
with $a + (_pB + B_p) = v$.   Then $qaq = i(v)$ so that $$i([a^* a])
 = q a^* a q = q a^* q a q  + q a^* (1-q) a q = q + q a^* (1-q) a q .$$
Taking norms, $1 \geq 1 + \Vert q a^* (1-q) a q \Vert$,
so that $q a^* (1-q) a q = 0$.
Hence $i([a^* a]) = q a^* a q = q$, from which it is clear that
 $q$ is compact.
\end{proof}

{\bf Remark.}  One may weaken the condition that $X$ is unital, to that
$X$ possesses an isometry or coisometry in the sense of the introduction.  The
proof above still works.  We also suspect that the result is also true
for general operator algebras (using the compact projections of \cite{BNc}).
 
\section{Characterizations of operator systems and $C^*$-algebras}

This section can be viewed as some remarks that 
naturally belong with the sections on operator systems in \cite{BN}.
We begin with a characterization of the 
`positive' part of a unital operator space:
 
\begin{lemma} \label{ref} If $A$ is
 a unital operator space or approximately unital
operator algebra,  then an element  $x \in {\rm Ball}(A)$ 
is in the positive cone of $A \cap A^*$  iff $\Vert 1 - zx \Vert 
\leq 1$  for all $z \in \Cdb$ with $|1-z| \leq 1$.
\end{lemma}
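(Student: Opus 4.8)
The plan is to prove Lemma \ref{ref} by reducing to the case of a unital $C^*$-algebra, where the condition becomes a clean spectral statement, and then transfer back via the complete isometry into $C^*_e(A)$ (or, in the approximately unital operator algebra case, into a multiplier algebra of the $C^*$-envelope). First I would reduce both sides of the claimed equivalence to statements inside the generated $C^*$-algebra. For a unital operator space $(A,1)$ we have $A \subseteq C^*_e(A)$ as a unital subspace; the positive cone of $A \cap A^*$ is by definition $A \cap (C^*_e(A))_+$, and the condition ``$\Vert 1 - zx\Vert \le 1$ for all $z$ with $|1-z|\le 1$'' is a norm condition that is computed in the same $C^*$-algebra. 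Thus it suffices to characterize, for a fixed $x \in \mathrm{Ball}(B)$ with $B$ a unital $C^*$-algebra, when $x$ is positive purely in terms of the numbers $\Vert 1 - zx\Vert$ over the disc $D = \{z : |1-z|\le 1\}$.

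The geometric heart is the following observation about the disc $D$: it is the closed disc of radius $1$ centered at $1$, i.e.\ $\{z : \mathrm{Re}(1/z) \ge 1/2\} \cup \{0\}$, equivalently the image of the right half-plane under $w \mapsto w/(1+ \cdots)$-type maps; more usefully, $1 - D = \{w : |w| \le 1\} = \overline{\Bdb}$ is just the closed unit disc, so writing $w = 1-z$ the condition reads $\Vert w + (1-w)x' \Vert$ --- let me instead keep it as $\Vert 1 - zx\Vert$. The key step is: for a normal element $x$ of $B$ the condition $\sup_{z \in D}\Vert 1 - zx\Vert \le 1$ is equivalent to requiring, via the continuous functional calculus, that $\sup_{z\in D} |1 - z\lambda| \le 1$ for every $\lambda \in \sigma(x)$. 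So I would show that $\sup_{z\in D}|1 - z\lambda| \le 1$ holds exactly when $\lambda \in [0,\infty)$ (together with the norm constraint $\lambda \le 1$), by analyzing the affine image of the disc $D$ under multiplication by $\lambda$ and translation: $1 - \lambda D$ is a disc of radius $|\lambda|$ centered at $1-\lambda$, and this disc lies in the closed unit disc iff $|1-\lambda| + |\lambda| \le 1$, which forces $\lambda \in [0,1]$. This is the clean computation that pins down positivity.

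The main obstacle, and the step I would treat most carefully, is handling the \emph{non-normal} case and the reduction of positivity of $x$ to a statement only about its self-adjoint/normal behavior: a priori $x \in \mathrm{Ball}(A)$ need not be self-adjoint, and the hypothesis is about the single element $x$, so I cannot immediately invoke functional calculus. I expect to argue that the condition $\Vert 1 - zx\Vert \le 1$ for all $z \in D$ already forces $x$ to be self-adjoint (hence normal). Concretely, taking $z = t$ small and real, $\Vert 1 - tx\Vert \le 1$ expands (using $\Vert 1 - tx\Vert^2 = \Vert 1 - t(x+x^*) + t^2 x^* x\Vert$) to give, to first order in $t$, that $x + x^* \ge 0$; combining with $z = it$ type choices and the fact that $D$ contains a neighborhood of $1$ pinched to $0$, one extracts that the skew-adjoint part of $x$ must vanish and the self-adjoint part must be positive. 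This infinitesimal/first-order analysis near $z = 0$ is the delicate part, since one must legitimately pass from the norm inequality to the derivative condition $\mathrm{Re}\,\varphi(x) \ge 0$ for states $\varphi$, which I would do by testing against states $\varphi$ with $\varphi(1) = 1$ and using $\Vert y \Vert \ge |\varphi(y)|$.

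Finally I would assemble the two directions. For the forward direction, if $x$ is positive with $\Vert x\Vert \le 1$ then $\sigma(x) \subseteq [0,1]$ and the disc computation above gives $\Vert 1 - zx\Vert = \sup_{\lambda \in \sigma(x)}|1-z\lambda| \le 1$ for all $z \in D$ by the functional calculus. For the converse, the self-adjointness argument plus the spectral computation shows $\sigma(x) \subseteq [0,1]$, whence $x$ is positive and lies in the positive cone of $A \cap A^*$. The approximately unital operator algebra case is handled by the same argument after passing to the unitization or to the bidual $A^{**}$, which is unital, noting that all the relevant norms and the positive cone of $A \cap A^*$ are unchanged under this passage; this is where I would cite the standard facts about $C^*_e(A)$ and multiplier algebras to justify computing $\Vert 1 - zx\Vert$ in a unital containing $C^*$-algebra.
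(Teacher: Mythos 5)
Your proposal is correct in its essentials, but it cannot be compared against an in-paper argument in any detail, because the paper's own ``proof'' of this lemma is a one-line citation to the argument for Lemma 8.5 of \cite{BRead}. What you have written is a self-contained version of the standard numerical-range argument, and its two sound ingredients already suffice: (i) the scalar computation that, for $\mu \in \mathbb{C}$, one has $\sup_{z \in D} |1 - z\mu| \le 1$ (where $D = \{z : |1-z| \le 1\}$) iff the disc of centre $1-\mu$ and radius $|\mu|$ lies in the closed unit disc, iff $|1-\mu| + |\mu| \le 1$, which by the equality case of the triangle inequality forces $\mu \in [0,1]$; and (ii) testing against states $\varphi$ of a unital containing $C^*$-algebra (e.g.\ $C^*_e(A)$, or one containing the unitization in the approximately unital operator algebra case), which converts $\Vert 1 - zx \Vert \le 1$ into $|1 - z\varphi(x)| \le 1$ for all $z \in D$, hence $\varphi(x) \in [0,1]$ for every state. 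This gives self-adjointness and positivity in one stroke: writing $x = h + ik$ with $h,k$ self-adjoint, $\varphi(k) = 0$ for all states forces $k = 0$ (the norm of a self-adjoint element equals its numerical radius), and then $\sigma(x)$ lies in the closure of the numerical range, so $\sigma(x) \subseteq [0,1]$ and $x \ge 0$. The forward direction via functional calculus, $|1 - z\lambda| \le (1-\lambda) + \lambda|1-z| \le 1$ for $\lambda \in \sigma(x) \subseteq [0,1]$, is also fine.

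The one place your write-up goes astray is precisely the paragraph you flag as ``the delicate part,'' and you should delete it rather than repair it. The choices ``$z = it$ for small real $t$'' are not admissible: $|1 - it| = \sqrt{1+t^2} > 1$ for every real $t \ne 0$, so no such $z$ lies in $D$. Moreover, the first-order expansion at $z = t$ (small real $t > 0$) only yields accretivity, $\mathrm{Re}\, \varphi(x) \ge 0$, which does not by itself give self-adjointness. Fortunately none of this infinitesimal analysis is needed: the state test over the \emph{whole} disc $D$, combined with your scalar computation, already pins $\varphi(x)$ to the segment $[0,1]$, which is strictly stronger than anything the first-order argument could produce.
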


\begin{proof}   This follows from the argument for Lemma 8.5 in
\cite{BRead}.  \end{proof}  

The last result easily leads to a metric-linear characterization of
operator systems: they are the unital operator spaces $X$ spanned by
the positive cone of $A \cap A^*$, the latter characterized in 
Lemma \ref{ref}.    We now give
another metric-linear characterization of
operator systems which should have been stated in \cite{BN}.
We use the notation above Lemma \ref{sf}.
 
\begin{proposition}   \label{chos}  If $X$ is an operator space
possessing a 
conjugate linear involution $* : X \to X$, 
and an element $v \in {\rm Ball}(X)$ with $v = v^*$, then 
there exists a $*$-linear complete isometry $T : X \to Y$ onto
an operator system  $Y$ with $T(v) = 1$, iff 
$$\Vert r^{v_n}_x \Vert = \sqrt{1 + \Vert x \Vert^2} \; ,
\qquad n \in \Ndb, \; x \in M_n(X) .$$   
\end{proposition}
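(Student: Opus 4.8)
The plan is to prove the two directions separately, with the forward direction being essentially immediate from Lemma \ref{sf} and the reverse direction carrying the real content. For the forward ($\Rightarrow$) direction, suppose $T : X \to Y$ is a $*$-linear complete isometry onto an operator system $Y$ with $T(v) = 1$. Since $T$ is completely isometric and $*$-linear, applying $T$ entrywise to the matrix $r^{v_n}_x$ sends it to $r^{1}_{T_n(x)} = r_{T_n(x)}$ computed inside the operator system $Y$; because $T$ is a complete isometry these norms agree. Then formula (\ref{rx}) of Lemma \ref{sf}, applied in the operator system $Y$, gives $\Vert r_{T_n(x)} \Vert = \sqrt{1 + \Vert T_n(x) \Vert^2} = \sqrt{1 + \Vert x \Vert^2}$, which is exactly the asserted equality.

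For the reverse ($\Leftarrow$) direction, the strategy is to reduce the norm hypothesis on $r^{v_n}_x$ to the hypothesis of Theorem \ref{tcl} so as to first conclude that $(X,v)$ is a \emph{unital} operator space, and then to upgrade this to an operator system using the self-adjointness built into $v = v^*$ and the $*$-structure. First I would observe that the involution $*$ together with $v = v^*$ lets us form $r^{v_n}_x$ for every $x$, and that by taking $x$ of small norm the right-hand side $\sqrt{1 + \Vert x\Vert^2}$ dominates $\sqrt{1 + \Vert x \Vert}$ only when $\Vert x \Vert \geq 1$; so a direct comparison with Theorem \ref{tcl} is not automatic and must instead be routed through the block-matrix manipulations of Lemma \ref{abbal} and Lemma \ref{ambbal}. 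Concretely, I would build a larger matrix out of copies of $v_n$ and $x$ (in the spirit of the $4\times 4$ matrix appearing in the proof of Theorem \ref{tc3}) whose norm can be computed two ways: once via (\ref{ambba}), which produces the quantity $\Vert r^{v_n}_x\Vert = \sqrt{1+\Vert x\Vert^2}$ that the hypothesis pins down, and once in a form that exhibits a $t^{v}_{x}$-type block, thereby feeding Theorem \ref{tcl}. Establishing that $(X,v)$ is a unital operator space identifies $v$ with the unit $1$ in $C^*_e(X)$, after which one must show $X = X^*$ inside $C^*_e(X)$ and that the embedding can be chosen $*$-linear with $T(v)=1$.

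The main obstacle I expect is the final self-adjointness step: having shown $(X,v)$ is a unital operator space, one has a complete isometry into some $B(H)$ sending $v$ to $I_H$, but this need not respect the given involution $*$ on $X$, nor need the image be self-adjoint. The key will be to exploit that the hypothesized equality holds for the skew matrix $r^{v_n}_x$ rather than merely the symmetric $s^{v_n}_x$; the discrepancy between formulas (\ref{sx}) and (\ref{rx}) encodes the interaction between $x$ and $x^*$, and matching $\Vert r_x \Vert = \sqrt{1+\Vert x\Vert^2}$ inside $C^*_e(X)$ against the abstract value forces the Banach-space involution $*$ to coincide with the $C^*$-involution restricted to $X$. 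Making this matching rigorous — most cleanly by computing $(r^{v_n}_x)^* r^{v_n}_x$ via the $C^*$-identity inside $M_{2n}(C^*_e(X))$ and comparing with the prescribed norm — is where the argument must be carried out with care, and it is what simultaneously yields that $X$ is $*$-closed and that $T$ may be taken $*$-linear, completing the proof that $X$ is an operator system.
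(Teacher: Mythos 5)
Your forward direction is correct and is the paper's own: apply the $*$-linear complete isometry $T$ entrywise, note that $r^{v_n}_x$ maps to $r_{T_n(x)}$ in the operator system $Y$, and invoke (\ref{rx}). The reverse direction, however, has a genuine gap at its load-bearing step: the deduction that $(X,v)$ is a unital operator space is never actually performed. You propose to route the hypothesis through Theorem \ref{tcl}, you correctly observe that the direct comparison fails (for $x$ of small norm one has $\sqrt{1+\Vert x\Vert^2}<\sqrt{1+\Vert x\Vert}$, and small-norm $x$ is exactly what Theorem \ref{tcl} requires), and you then promise a ``larger matrix'' built from copies of $v_n$ and $x$ that bridges the gap --- but no such matrix is exhibited, and the identities (\ref{abba}) and (\ref{ambba}) only rearrange norms of the same order; they cannot by themselves manufacture the linear scaling $\sqrt{1+\Vert x\Vert}$ out of the quadratic scaling $\sqrt{1+\Vert x\Vert^2}$. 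So the promised construction is the entire difficulty, not a routine detail. Note also that (\ref{ambba}) identifies $\Vert r^{v_n}_x\Vert$ with $\max_k \Vert v_n + i^k x\Vert$ only when $x^*=x$, so even that step presupposes control of the involution. The paper avoids Theorem \ref{tcl} entirely and uses tools whose scaling matches the hypothesis: first it proves that $*$ is completely isometric (since $x^*$ is a corner of $r^{v}_{x}$, one has $t\Vert x^*\Vert =\Vert (tx)^*\Vert \le \Vert r^{v}_{tx}\Vert = \sqrt{1+t^2\Vert x\Vert^2}$; divide by $t$ and let $t\to\infty$), so that $X$ is a selfadjoint operator space by the discussion cited from \cite{BKNW}; then it shows the rows and columns of $r^{v_n}_x$ have norm exactly $\sqrt 2$ when $\Vert x\Vert =1$ and feeds this to the main theorem of \cite{BN} (Theorem \ref{uospr}), whose criterion $\Vert [v_n \; \; x]\Vert^2 = 1+\Vert x\Vert^2$ has precisely the quadratic scaling of the hypothesis, so no mismatch ever arises.

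Your final step is also circular as written: you want to compute $(r^{v_n}_x)^* r^{v_n}_x$ ``via the $C^*$-identity inside $M_{2n}(C^*_e(X))$,'' but the adjoint occurring in the $C^*$-identity is the involution of $C^*_e(X)$, while the matrix $r^{v_n}_x$ is built from the given abstract involution on $X$; unless the two are already known to coincide --- which is exactly what this step is supposed to prove --- that product is not the block matrix you expect and the computation yields nothing. The non-circular finish (and the paper's) is: once $(X,v)$ is known to be a unital operator space, rescale the hypothesis to get $\Vert r^{tv_n}_x\Vert = t\Vert r^{v_n}_{x/t}\Vert = \sqrt{t^2+\Vert x\Vert^2}\le\sqrt{1+t^2}$ for every contraction $x\in M_n(X)$ and every $t>0$, and then invoke the metric characterization of adjoints in 3(c) of \cite[Remark 3.5]{BN} (quoted in Section 6 of the paper), which says that this family of inequalities forces the abstract involution to agree with the involution of $C^*_e(X)$, i.e.\ that $(X,v)$ is an operator system.
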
  \begin{proof}  The one direction
follows  from (\ref{rx}).  For the other, first note that 
we have for any $x \in X$ that 
$\Vert x^* \Vert \leq \Vert r^{v}_x \Vert 
= \sqrt{1 + \Vert x \Vert^2}.$
Replacing $x$ by $tx$ for a positive scalar $t$ 
we obtain
$\Vert x^* \Vert \leq \sqrt{\frac{1}{t^2}  + \Vert x \Vert^2}$.
Letting $t \to \infty$ shows that $*$ is contractive, hence 
isometric since $*$ has period $2$.  
Similarly, $\Vert x^* \Vert = \Vert x \Vert$ 
if $x \in M_n(X)$, so that $X$ is a selfadjoint operator space 
by the discussion a
few paragraphs above Proposition 1.1 in \cite{BKNW}. 

 If $\Vert x \Vert = 1$,
and we 
 write 
the first row of $r^v_x$ as $a$, and the second of $r^v_x$ as $b$, then 
the norm of
each of these is $\leq \sqrt{2}$.  However $$4 \leq \Vert r_x \Vert^2 =
\Vert a^* a + b^* b \Vert \leq \Vert a \Vert^2 + \Vert b \Vert^2
\leq 4 .$$ Thus $\Vert a \Vert = \sqrt{2}$.  Similarly, 
the second column of $r^v_x$ has norm $\sqrt{2}$.  This works
analogously at the
matrix level, and we may now appeal to the main Theorem in \cite{BN}
to see that $(X,v)$ is a unital operator space.
Finally, we appeal to 3 (c) of \cite[Remark 3.5]{BN}  to see that
$(X,v)$ is an operator system.  \end{proof}

{\bf Remarks.}  \ 1) \ The following discussion rules out a possible simplification 
of the 
last characterization of operator systems.
In \cite{BN} we proved that if $X$ is a `selfadjoint
function space' with a selfadjoint `unitary' $u$,
then $(X,u)$ is a `function system' (the `commutative' variant
of an operator system).   The analoguous thing 
for operator systems is not true.
Indeed consider the selfadjoint operator space
 $X = \{ [x_{ij}] \in M_2 : x_{11} = 0, x_{12} = x_{21} \}$.
It is easy to see that $X$ generates $M_2$, so that $M_2$
is the ternary envelope of $X$.  However if
$u = E_{21} + E_{12}$ then $u X^* u = u X u$ is not contained in $X$.
It follows from the discussion at the start of Section 4 in
\cite{BN} that $(X,u)$ is not an operator system.  
This example also shows that
a selfadjoint operator space which is completely isometric
to a unital operator algebra, need not be completely isometric
to a $C^*$-algebra.

\medskip

2) \  One may ask if the equation (\ref{sx}) characterizes 
operator systems.  That is, 
if $u$ is a a selfadjoint unitary in a selfadjoint subspace
$X \subset B(H)$, then does the condition
$\Vert s^u_x \Vert =  1 +  ||x||$ for all $n \in \Ndb$ and $x \in M_n(X)$, force
$(X,u)$ to be an operator system?   Indeed a variant of the proof of Proposition \ref{chos} shows that
the equation $\Vert s^u_x \Vert =  1 +  ||x||$ above forces
$(X,u)$ to be a unital operator space.
 We leave this question to the interested
reader, suspecting that it is not hard to find a counterexample, and that 
 it is also not hard to find other simple conditions to
add to (\ref{sx}) to yield a characterization of operator systems.

\medskip

The following is a new metric-linear characterization of unital $C^{*}$-algebras 
among the operator systems, up to complete isometry.   The metric-linear characterizations
of $C^{*}$-algebras in  \cite{BN} referred to unitaries being spanning, which is
avoided here.  The result is certainly not
best possible, but the point again is that it is nice to know that
formulae exist that essentially {\em only} refer to the norm.

\begin{theorem}
\label{finno} An operator system $A$ has a product with respect to which it is
a $C^{*}$-algebra (with the same operator space structure)  
if and only if  for all  $x,y \in A$, there exist elements $b,z \in A$ such that
$$M_{+}=\frac{\left[ \begin{array}{ccccccl} y & 0&1&x&b&z \\ x & b&z&y&0&1
\end{array}  \right]}{\Vert\left[ \begin{array}{ccccccl} y & 0&1&x&b&z\\ x & b&z&y&0&1
\end{array}  \right] \Vert} \; \; , \; \; \; \; M_{-}=\frac{\left[ \begin{array}{ccccccl} y & 0&1&x&b&z \\
x & b&z&-y&0&-1
\end{array}  \right]}{\Vert\left[ \begin{array}{ccccccl} y & 0&1&x&b&z\\
x & b&z&-y&0&-1 \end{array}  \right] \Vert} $$
satisfy $||[M_+ \otimes I_{m},w]|| = \sqrt{2}$
and $||[M_- \otimes I_{m},w]|| = \sqrt{2}$,     
 for all $m\in \mathbb{N}$ and all contractions $w \in M_{2m}(A)$.  \end{theorem}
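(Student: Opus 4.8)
The plan is to recognize that the displayed $\sqrt{2}$-condition is, via Theorem \ref{uospr}, nothing but the assertion that the two normalized matrices $M_+$ and $M_-$ are coisometries in $M_{2,6}(A)$, and then to read off what being a coisometry forces on $b$ and $z$. First I would fix $x,y \in A$ together with candidate witnesses $b,z \in A$, and pass to $B = C^*_e(A)$, in which $A$ sits as a selfadjoint unital subspace and in which the ternary/coisometry computations take place (the ternary envelope of an operator system is its $C^*$-envelope, and that of $M_{2,6}(A)$ is $M_{2,6}(B)$). Writing $P_\pm$ for the two unnormalized $2\times 6$ matrices, the requirement $\Vert [ M_\pm \otimes I_m \; \; w ] \Vert = \sqrt{2}$ for all $m$ and all norm-one $w \in M_{2m}(A)$ says exactly that $\Vert [ M_\pm \otimes I_m \; \; w ] \Vert^2 = 1 + \Vert w \Vert^2$, which by Theorem \ref{uospr} is equivalent to $M_\pm M_\pm^* = I_2$, that is $P_\pm P_\pm^* = c_\pm^2 I_2$ in $M_2(B)$.

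The heart of the matter is a direct computation of the entries of $P_\pm P_\pm^*$. The off-diagonal $(1,2)$ entries work out to $yx^* + xy^* + z + z^*$ for $P_+$, and to $yx^* - xy^* + z^* - z$ for $P_-$; the two diagonal entries of each of $P_+$ and $P_-$ coincide and equal $1 + xx^* + yy^* + bb^* + zz^*$ (the sign changes in the second rows leave these untouched). Setting both off-diagonal entries to zero and solving the resulting pair of equations for $z$ and $z^*$ forces $z = -xy^*$; the purpose of carrying both $M_+$ and $M_-$ is precisely to decouple the selfadjoint and skew parts of $z$ and thereby pin $z$ down completely rather than merely its Hermitian part. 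Note that the column pair $(0,b)$ contributes nothing to the off-diagonal and enters only through $bb^*$ in the diagonal, where it acts as a free slot for forcing the (common) diagonal entry to be a scalar multiple of $1_B$.

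With this in hand both implications are short. For $(\Leftarrow)$, the hypothesis supplies, for each $x,y$, an element $z \in A$ with $z = -xy^*$; hence $xy^* \in A$, so $A A^* \subseteq A$. Since $A$ is selfadjoint and contains $1$, it is then a closed unital $*$-subalgebra of $B = C^*_e(A)$, hence a $C^*$-algebra carrying the inherited (that is, the given) operator space structure; this route sidesteps \cite{BRS} entirely. For $(\Rightarrow)$, given that $A$ is a $C^*$-algebra I would take $z = -xy^* \in A$ (annihilating both off-diagonals) and $b = (c^2 1 - 1 - xx^* - yy^* - zz^*)^{1/2} \in A$ with $c^2 = \Vert 1 + xx^* + yy^* + zz^* \Vert$, so that the common diagonal entry becomes $c^2 1$; then $P_\pm P_\pm^* = c^2 I_2$, the $M_\pm$ are coisometries, and the $\sqrt{2}$-condition follows. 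The main obstacle is the bookkeeping of the first step: verifying that the $\sqrt{2}$-condition really is the coisometry condition of Theorem \ref{uospr} in this rectangular $2 \times 6$ setting (and reading `contraction $w$' as a norm-one $w$, since for $\Vert w \Vert < 1$ one only obtains $\sqrt{1 + \Vert w \Vert^2}$), together with checking that the single column $b$ can always absorb the diagonal into a scalar while the $M_+/M_-$ pair determines $z$ exactly.
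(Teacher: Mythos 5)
Your framework is the same as the paper's: read the $\sqrt 2$-condition (correctly, for norm-one $w$) via Theorem \ref{uospr} as saying that $M_{\pm}$ are coisometries in the ternary envelope $M_{2,6}(C^*_e(A))$ of $M_{2,6}(A)$, observe that the off-diagonal entries of $P_{\pm}P_{\pm}^*$ give $(xy^*+z)\pm(yx^*+z^*)=0$, hence $z=-xy^*$, and use the column $b$ only to make the common diagonal entry a scalar multiple of $1$. Your ($\Leftarrow$) direction is complete and agrees with the paper's: $xy^*=-z\in A$ for all $x,y\in A$, so the selfadjoint unital subspace $A$ is a closed $*$-subalgebra of $C^*_e(A)$, hence a $C^*$-algebra with the given matrix norms.

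The genuine gap is in ($\Rightarrow$). The hypothesis is that $A$ admits \emph{some} product making it a $C^*$-algebra with the same operator space structure; it is not given that the unit of that product is the operator system unit $1$, nor that its multiplication and involution are the restrictions of those of $C^*_e(A)$. (For instance $\ell^\infty_2$, with operator system unit $(1,1)$, also carries the $C^*$-product $a\circ b=au^*b$, $a^{\#}=ua^*u$ with unit $u=(1,-1)$, and this product induces the same matrix norms.) Consequently your prescription ``take $z=-xy^*$'' is ambiguous: if the product and adjoint are the abstract hypothesized ones, there is no reason the off-diagonal entries of $P_{\pm}P_{\pm}^*$ vanish, since those entries are computed with the multiplication and involution of $C^*_e(A)$; if instead they are taken in $C^*_e(A)$, there is no reason that $z$ (or $b$) lies in $A$. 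The paper's proof devotes its entire second half to closing exactly this gap: a $C^*$-algebra is a TRO generated by itself, so the hypothesized $C^*$-algebra $(B,\Psi)$ may be taken to be a ternary envelope of $A$; the universal property then yields a ternary isomorphism $\theta:B\to C^*_e(A)$ with $\theta\circ\Psi=i$, whence $i$ is surjective and $A$ \emph{with its original identity} equals $C^*_e(A)$. Only after this step do $z=-xy^*$ and $b=\sqrt{\Vert xx^*+yy^*+zz^*\Vert\cdot 1-xx^*-yy^*-zz^*}$ make sense as elements of $A$, and your coisometry verification then goes through verbatim. You need to supply this ternary-envelope (or equivalent Banach--Stone type) argument before choosing the witnesses.
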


\begin{proof} Suppose the condition involving $M_{+}$ and $M_{-}$ 
holds for all $x,y \in A$ and let $S$ denote the $C^*$-envelope of $A$. Abusing notation, consider $A$
as canonically embedded in $S$. By Theorem \ref{uospr} the above
condition guarantees that $M_{+}$ and $M_{-}$ are coisometries in 
the ternary envelope
$M_{2,6}(S)$ of $M_{2,6}(A)$.  This implies that
$(xy^{*} + z) \pm (yx^{*} +z^{*}) = 0$ in $S$, so that
$z = -xy^{*}$ lies in $S$.  Hence $A$ is a subalgebra of its $C^*$-envelope, and
hence $A$ coincides with its $C^*$-envelope.

Conversely, suppose $A$ is linearly completely isometric to a $C^*$-algebra
$B$ via a map $\Psi : A \to B$.    Since $A$
is a unital operator space, it has a ternary envelope $(C,i)$
which is a unital $C^*$-algebra $C$ with $i(1) = 1_C$
(see e.g.\ \cite{Ham,BSh}).  By the theory of the
ternary envelope \cite{Ham},  we may take $(B,\Psi)$
to be a ternary envelope of $A$, and by the 
universal property of the ternary envelope there exists a
one-to-one ternary isomorphism $\theta : B \to C$ with
$\theta \circ \Psi = i$.   It follows that $i$ is
surjective, so that $A$ with its original
identity has a product with respect to which
it is a $C^*$-algebra.   In this $C^*$-algebra, let  $z = -x y^*$ and let
$b =  \sqrt{\Vert x x^{*}+ y y^{*}+z z^{*} \Vert \cdot 1-x x^{*}
-y y^{*} - z z^{*}}.$
It is now easy to check that $M_+$ and $M_-$ are coisometries.
 The result then follows from Theorem \ref{uospr}.  \end{proof}

{\bf Remark.}   An
operator system linearly completely isometric to a TRO or unital operator algebra
has a product with respect to which it is a $C^{*}$-algebra (with the same
operator system structure as the original one).   Indeed, the last theorem is true if we 
replace `$C^{*}$-algebra' in the statement with `TRO', or `unital operator algebra',
or `unital $C^{*}$-algebra' or `unital $C^{*}$-algebra with $1$ mapping to $1$'.
The proofs of these are usually the same as the proof of \ref{finno}, 
except in the `unital operator algebra' case 
where one should also  use \cite[Proposition 4.2]{BN}.  

 It is not true
however that an operator system which is linearly completely isometric to an
operator algebra, need be completely isometric to a $C^{*}$-algebra.  
Thus one cannot characterize $C^{*}$-algebras as
operator systems with a general operator algebra product.
For a counterexample, consider the operator system in \cite[Proposition
2.1]{BM1}, which by that result and Sakai's theorem 
 cannot be completely isometric to a $C^{*}$-algebra.
However the multiplication $(x,y) \mapsto x k y$ for a fixed contraction $k$
in the image of the compact operators in $X$, makes $X$ an operator algebra
by Remark 2 on p.\ 194 of \cite{BRS}.    
  
\section{Characterizations of operator algebra products}

In the last sections of our paper we will consider characterizations
of operator algebras.  The first point to be made
is that although we have not found one as yet,
there {\em ought} to be a purely linear-metric characterization of unital operator
algebras.  Indeed, we know from the noncommutative Banach-Stone theorem
that the identity in a unital operator algebra $A$ determines the
product (this is true even if the identity is one-sided \cite[Corollary 5.3]{Bone}).  Moreover, if we have forgotten the product on a unital operator algebra $A$
it can be recovered from the unital operator space structure by the methods of e.g.\
\cite[Section 6]{MCAA}.  These methods certainly 
yield a characterization of unital operator
algebras using only the `unital operator space data', but they are not quite 
`linear-metric' in our strict  
sense, since they refer to certain linear maps on $A$ for example.  
The second point is that it is still open
as to whether there is a truly metric condition 
on a bilinear map $m : X \times X \to X$ on an operator space $X$
 characterizing when $m$ is a (nonunital) operator algebra product.  
We offer in the remainder of the paper two partial contributions to 
these subjects.  In most of the present section we focus on this
second point in the case that $X$ posseses an isometry
or coisometry  (which need not be even a one-sided 
identity for the ensuing operator algebra
product).  Thus we are giving variants and extensions of 
the characterization of operator algebras from \cite{BRS}.  
In Section 6, we address the first point
with a linear-metric characterization of operator
algebras which does use elements of a containing $C^*$-algebra.
In \cite{NR2} a holomorphic characterization of operator algebras
is given (generalizing the  holomorphic characterization of $C^*$-algebras from
\cite{NR}).

\begin{lemma} \label{leco}
 If an operator algebra $A$ contains a left identity $u$ of norm $1$,
then $u$ is a coisometry in $A$ in the sense of the introduction.
\end{lemma}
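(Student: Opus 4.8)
The plan is to realize $u$ concretely inside $B(H)$ and then verify the definition of coisometry directly, the main leverage being the classical fact that a norm-one idempotent is a selfadjoint projection. Since $A$ is an operator algebra, I would fix a completely isometric homomorphism of $A$ into some $B(H)$ and identify $A$ with its image; this map is in particular a linear complete isometry, so by the definition at the end of the introduction it suffices to exhibit a TRO $Z \supseteq A$ inside $B(H)$ with $u u^* = 1_{Z Z^*}$. Because $u$ is a left identity we have $u^2 = u$, and since $\Vert u \Vert = 1$ the element $u$ is a nonzero norm-one idempotent in $B(H)$, hence a selfadjoint projection $p = u = u^* = u^2$. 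In particular $u u^* = p$.

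Next, let $Z$ be the sub-TRO of $B(H)$ generated by $A$. The key point is that $p$ is a left identity not merely for $A$ but for all of $Z$: on a typical generator $a b^* c$ with $a, b, c \in A$ one has $p(a b^* c) = (pa) b^* c = a b^* c$, and this extends to $Z$ by linearity and continuity. Taking adjoints and using $p = p^*$ shows $p$ is also a right identity for $Z$, since $w^* p = (p w)^* = w^*$. Consequently $p$ is a two-sided identity for the $C^*$-algebra $Z Z^*$: on spanning elements $z w^*$ we get $p(z w^*) = (pz) w^* = z w^*$ and $(z w^*) p = z (w^* p) = z w^*$. As $p = u u^* \in Z Z^*$, it follows that $u u^* = 1_{Z Z^*}$, which is exactly the assertion that $u$ is a coisometry in $A$.

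A cleaner alternative avoids constructing $Z$ explicitly: by Theorem \ref{uospr} (with $m = n = 1$) it is enough to check $\Vert [u_k \; x] \Vert^2 = 1 + \Vert x \Vert^2$ for all $k \in \Ndb$ and $x \in M_k(A)$. Writing $P = u_k = p \otimes I_k$, the left-identity relation gives $u_k x = x$, hence $x x^* = P x x^* P$ sits in the corner $P M_k(B(H)) P$, in which $P$ is the unit. Then $\Vert [u_k \; x] \Vert^2 = \Vert u_k u_k^* + x x^* \Vert = \Vert P + x x^* \Vert = 1 + \Vert x x^* \Vert = 1 + \Vert x \Vert^2$, using the corner identity $\Vert 1 + t \Vert = 1 + \Vert t \Vert$ for positive $t$.

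The only genuinely nontrivial ingredient is the passage $u \mapsto$ projection via the norm-one-idempotent fact; once that is in hand, the rest reduces to the elementary observation that a selfadjoint element acting as a left identity inside a $C^*$-algebra it belongs to must be the unit. I therefore expect no deep obstacle, only bookkeeping: one must make sure the left-identity property really transfers from $A$ to the whole generated TRO $Z$ (respectively, that $x x^*$ genuinely lands in the corner and that its corner norm agrees with its norm in $M_k(B(H))$), which is routine but should be stated carefully.
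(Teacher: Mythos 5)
Your proposal is correct, and in fact it contains two proofs. Your ``cleaner alternative'' is essentially the paper's own argument: the paper also observes that a left identity of norm one is a projection in any containing $C^*$-algebra, computes
$\Vert [\, u \; \; x \,] \Vert^2 = \Vert u u^* + x x^* \Vert = \Vert u + u x x^* u \Vert = 1 + \Vert x \Vert^2$
(using $x = ux$ and the corner/unital identity $\Vert 1 + t \Vert = 1 + \Vert t\Vert$ for $t \geq 0$), notes the same works for matrices, and then invokes Theorem \ref{uospr}. Your first argument is genuinely different: instead of appealing to the metric characterization, you build the TRO $Z$ generated by $A$ inside $B(H)$ and verify the definition of coisometry directly, showing $p = u = u^*$ acts as the identity on the spanning elements $zw^*$ of $ZZ^*$. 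This route is more self-contained -- it bypasses Theorem \ref{uospr}, whose proof is only sketched in the paper and rests on the main theorem of \cite{BN} -- and is closer in spirit to the alternative references the paper cites (\cite{Kext}, \cite[Theorem 4.4]{Bone}); what it loses is precisely the point the paper is making, namely that the lemma drops out as ``a nice application'' of the metric characterization theorem. One small correction to your wording: from $pw = w$ for all $w \in Z$, taking adjoints gives $w^* p = w^*$, which says $p$ is a right identity for $Z^*$, not for $Z$ (indeed $p$ need not be a right identity for $Z$: take $A = {\rm span}\{E_{11}, E_{12}\} \subset M_2$ with $u = E_{11}$, where $E_{12}E_{11} = 0$). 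This does not affect your argument, since the identity you actually use, $(zw^*)p = z(w^*p) = zw^*$, only multiplies elements of $Z^*$ on the right by $p$.
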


\begin{proof}  This follows from e.g.\ \cite{Kext} or the considerations
involved in  \cite[Theorem 4.4]{Bone}, 
but we give a quick proof of it
using the main theorem in \cite{BN} (or Theorem \ref{uospr}  above),
as a nice application
of that result.  Note that $u$ is a projection in any containing
$C^*$-algebra, and so if $x = u x \in A$ has norm $1$ then
$$\Vert [ u \; \; x ] \Vert^2 = \Vert u u^* + x x^* \Vert = \Vert u + u x x^* u \Vert =
1 + \Vert ux \Vert^2 = 1 + \Vert x \Vert^2 = 2 .$$
Similarly for matrices, so that $u$ is a coisometry in $A$ by Theorem \ref{uospr}.
 \end{proof}

For some of the characterizations
of operator algebras below  we will use the  
quasimultiplier formulation of operator algebras \cite{KP}.  For the
readers convenience we  include a simple unpublished
proof of this that was in a preliminary version of \cite{BLM}, and 
which was presented e.g.\ at the 
Banach Algebras 2007 conference.  Here $I(X)$ is
the injective envelope of $X$, which is a TRO containing 
the ternary envelope of $X$ as a subTRO
 (see e.g.\ \cite{Ham} or \cite[Section 4]{BLM}).

\begin{theorem} \label{KP}  {\rm (Kaneda-Paulsen)} \
Let  $X$ be an operator space.  The
algebra products on $X$ for which
there exists a completely isometric homomorphism from $X$
onto an operator algebra, are in a bijective correspondence
with the
 elements $z \in {\rm Ball}(I(X))$
such that $X z^* X
\subset X$.  For such $z$ the associated
operator algebra product on $X$ is $x z^* y$.
\end{theorem}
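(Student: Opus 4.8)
The plan is to prove the two implications of the stated bijection separately and then to check that the assignment $z \mapsto m_z$, where $m_z(x,y) = x z^* y$, is one-to-one. The forward direction (from a contractive quasimultiplier to an operator algebra product) will be an easy application of the theorem of Blecher, Ruan and Sinclair \cite{BRS}. The backward direction (producing $z$ from a given operator algebra product) is where the injectivity of $I(X)$ is genuinely needed, and it is the step I expect to be the main obstacle.

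For the forward direction, fix $z \in {\rm Ball}(I(X))$ with $X z^* X \subseteq X$, so that $m_z(x,y) = x z^* y$ is a well-defined bilinear product on $X$. Realize $I(X)$ completely isometrically as a TRO inside some $B(K,H)$. The ternary product $(a,b,c) \mapsto a b^* c$ on a TRO is completely contractive in its three variables jointly, so freezing the middle variable at $z$ (with $\Vert z \Vert \le 1$) shows that $m_z$ is completely contractive on $X$, i.e.\ $\Vert [\sum_k x_{ik} z^* y_{kj}] \Vert \le \Vert [x_{ij}] \Vert \, \Vert [y_{ij}] \Vert$ for all matrices over $X$. Associativity is immediate from $(x z^* y) z^* w = x z^* (y z^* w)$. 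Hence $(X, m_z)$ carries a completely contractive associative product, and by \cite{BRS} there is a completely isometric homomorphism of $(X, m_z)$ onto an operator algebra.

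For the backward direction, suppose $\rho : X \to B(H)$ is a completely isometric homomorphism onto an operator algebra $A = \rho(X)$, and identify $X$ with $A$. I would pass to the Paulsen operator system $S(X) = \left[ \begin{smallmatrix} \mathbb{C} & X \\ X^* & \mathbb{C} \end{smallmatrix} \right] \subseteq M_2(B(H))$, whose injective envelope $\mathcal{R} = I(S(X))$ is a unital $C^*$-algebra of the form $\left[ \begin{smallmatrix} \mathcal{A} & I(X) \\ I(X)^* & \mathcal{B} \end{smallmatrix} \right]$, with $I(X)$ the $1$-$2$ corner (a TRO) and $\mathcal{A}, \mathcal{B}$ unital injective $C^*$-algebras. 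By injectivity there is a unital completely positive $\Psi : M_2(B(H)) \to \mathcal{R}$ extending the canonical embedding of $S(X)$; I set $z$ to be the $1$-$2$ corner of $\Psi \left[ \begin{smallmatrix} 0 & 1_H \\ 0 & 0 \end{smallmatrix} \right]$, so that $z \in {\rm Ball}(I(X))$, and $z^*$ is the $2$-$1$ corner of $\Psi \left[ \begin{smallmatrix} 0 & 0 \\ 1_H & 0 \end{smallmatrix} \right]$ since $\Psi$ is selfadjoint. The goal is then to show $x z^* y = xy = m(x,y)$ for $x,y \in X$, computed in $\mathcal{R}$, by applying $\Psi$ to the factorization $\left[ \begin{smallmatrix} 0 & x \\ 0 & 0 \end{smallmatrix} \right] \left[ \begin{smallmatrix} 0 & 0 \\ 1 & 0 \end{smallmatrix} \right] \left[ \begin{smallmatrix} 0 & y \\ 0 & 0 \end{smallmatrix} \right] = \left[ \begin{smallmatrix} 0 & xy \\ 0 & 0 \end{smallmatrix} \right]$ valid in $M_2(B(H))$: the right-hand side is fixed by $\Psi$ since $xy \in A$, while the left-hand side should split through $\Psi$ to give the corner product $x z^* y$. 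The technical heart — and the step I expect to be hardest — is that the elements of $S(X)$ lie in the multiplicative domain of $\Psi$ in the sense needed to split $\Psi$ across this triple product; equivalently, that Hamana's minimal projection onto $I(X)$ is a bimodule map over the multiplier algebra in which $A$ sits, which is what makes $z = \Phi(1)$ implement the original product. One then also checks $X z^* X \subseteq X$, so $z$ is a quasimultiplier.

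Finally, for injectivity of the correspondence, suppose $z, z' \in {\rm Ball}(I(X))$ induce the same product, so that $x (z - z')^* y = 0$ for all $x, y \in X$. Since $X$ generates $I(X)$ as a TRO and the injective envelope is essential (rigid) over $X$, no nonzero element of $I(X)$ can be annihilated on both sides by $X$; hence $z = z'$. Together with the forward direction (which shows each $m_z$ is an operator algebra product) and the backward direction (surjectivity onto such products), this exhibits $z \mapsto m_z$ as a bijection from $\{ z \in {\rm Ball}(I(X)) : X z^* X \subseteq X \}$ onto the operator algebra products on $X$, completing the proof.
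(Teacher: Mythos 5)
Your forward direction and overall architecture are close to the paper's (which also disposes of that direction by quoting Remark 2 on p.\ 194 of \cite{BRS}, applied with $V = z^*$ and $I(X)$ viewed as a TRO in $B(H)$; note that since $m_z$ need not be unital, the \emph{main} theorem of \cite{BRS} does not literally apply, so it is that remark, or the nonunital generalization in \cite[Theorem 2.3.2]{BLM}, that you should cite). The genuine gap is in the backward direction, at exactly the step you flag as ``the technical heart'': it is never proved, and the route you propose for closing it is wrong as stated. It is false in general that $S(X)$ lies in the multiplicative domain of $\Psi$ --- that would make $\Psi$ a $*$-homomorphism on the $C^*$-algebra generated by $S(X)$, which is far too strong. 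The correct statement is weaker and one-sided: for a completely contractive \emph{idempotent} $P$ onto $I(X)$ one has Youngson's identity $P(P(a)\, b^*\, P(c)) = P(P(a)\, P(b)^*\, P(c))$, in which the inner map may be inserted only in the middle variable and the outer $P$ is kept. This is \cite[Theorem 4.4.9]{BLM}, and it is precisely what the paper invokes: realizing $X \subset I(X) \subset B(H)$ with a completely contractive projection $P : B(H) \to I(X)$ and putting $z = P(1)$, one gets $xy = P(x 1^* y) = P(x P(1)^* y) = x z^* y$ in one line. Note also that your $\Psi$ is only a ucp \emph{extension} of $S(X) \hookrightarrow I(S(X))$, not idempotent; the Choi--Effros/Youngson identities require idempotency, so you must first realize $I(S(X))$ as the range of a minimal $S(X)$-projection on $M_2(B(H))$ (Hamana) --- with an abstract $\Psi$ the splitting you want is not even available.

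There is a second gap, in your injectivity argument: the claim that ``$X$ generates $I(X)$ as a TRO'' is false in general. What $X$ generates is the ternary envelope ${\mathcal T}(X)$, usually a proper subTRO of $I(X)$; and even granting generation, two-sided annihilation by $X$ would not obviously force an element to vanish. The fact you need --- that $X w^* X = (0)$ for $w \in I(X)$ implies $w = 0$ --- is a nontrivial consequence of rigidity of the injective envelope. The paper deduces it from \cite[Proposition 4.4.12]{BLM} and its right-handed version, in two steps: $X w^* X = (0)$ forces $X w^* = (0)$, hence $X w^* w = (0)$, hence $w^* w = 0$, so $w = 0$. The correspondence is indeed injective, but not for the reason you give.
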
   \begin{proof}  The one direction, and the last
statement, follows
from Remark 2 on p.\ 194 of \cite{BRS}, viewing
$I(X)$ as a TRO in $B(H)$, and $V = z^*$.
For the other direction, if $X$ is a subalgebra of $B(H)$ say,
then by the theory of the injective envelope (see e.g.\
\cite{Ham} or \cite[Section 4]{BLM}) we can view
$X \subset I(X) \subset B(H)$, and there exists
a completely contractive projection $P$ from $B(H)$ onto
$I(X)$.  Set $z = P(1)$.   For $x,y \in X$
 we have $xy = P(x1^*y) = P(x P(1)^* y)$, by Youngson's theorem
 \cite[Theorem 4.4.9]{BLM}, the proof 
of which asserts that  the last quantity is the ternary product $x z^* y$ in $I(X) = {\rm Ran}(P)$.  The
bijectivity follows from e.g.\ \cite[Proposition
4.4.12]{BLM} and its `right-hand version': if $X z^* X = (0)$ then $X z^* = (0) = 
X z^* z$, so $z^* z = 0 = z$.  \end{proof}

If the operator space has more structure then
one can say more (see \cite{Kext}):  

\begin{corollary} \label{kpc}
Let  $(X,u)$ be a unital operator space.  The
algebra products on $X$ for which
there exists a completely isometric homomorphism from $X$
onto an operator algebra, are in a bijective correspondence
with the elements  $w \in {\rm Ball}(X)$ such that
$X w X \subset X$ (multiplication taken in the $C^*$-envelope
$C^*_{\rm e}((X,u))$).
For such $w$ the associated
operator algebra product on $X$ is $x w y$.
\end{corollary}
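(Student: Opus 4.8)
The plan is to derive this directly from the Kaneda--Paulsen theorem (Theorem \ref{KP}) by exploiting that $u$ is a unit. First I would recall the relevant envelope structure: since $(X,u)$ is a unital operator space, its injective envelope $I(X)$ may be taken to be a unital $C^*$-algebra with identity $u$, containing $C^*_{\rm e}((X,u))$ as a $C^*$-subalgebra, so that $X \subset C^*_{\rm e}((X,u)) \subset I(X)$ (Hamana's theory; see e.g.\ \cite{Ham} or \cite[Section 4]{BLM}). On such an $I(X)$ the ternary product $a b^* c$ is just the ordinary $C^*$-product, so the operator algebra products furnished by Theorem \ref{KP} are exactly the maps $(x,y) \mapsto x z^* y$, as $z$ ranges over the set $\{ z \in {\rm Ball}(I(X)) : X z^* X \subset X \}$, with distinct $z$ giving distinct products.

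The crux will be to replace the parameter $z \in I(X)$ by an element of $X$ itself. I would show that the adjoint map $z \mapsto w := z^*$ carries the Kaneda--Paulsen parameter set bijectively onto $\{ w \in {\rm Ball}(X) : X w X \subset X \}$. For the forward inclusion, note that since $u = 1_{I(X)}$ and $u \in X$, taking $x = y = u$ in $X z^* X \subset X$ gives $z^* = u z^* u \in X$; moreover $\Vert z^* \Vert = \Vert z \Vert \leq 1$, while $X z^* X \subset X$ reads $X w X \subset X$ for $w = z^*$. This one line, in which the unit is used to pull $z^*$ back into $X$, is the main point, and the only place where unitality is genuinely needed. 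Conversely, if $w \in {\rm Ball}(X)$ satisfies $X w X \subset X$, then $z := w^* \in {\rm Ball}(I(X))$ (as $w \in X \subset I(X)$ and $I(X)$ is a $C^*$-algebra) satisfies $X z^* X = X w X \subset X$ and $z^* = w$; hence $z \mapsto z^*$ and $w \mapsto w^*$ are mutually inverse bijections between the two sets.

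It would then remain only to transport the correspondence. Under $w = z^*$ the Kaneda--Paulsen product $x z^* y$ becomes $x w y$, with the multiplication computed in $I(X)$ but equally in $C^*_{\rm e}((X,u))$, since the two products agree on $X$. Composing the bijection of the previous paragraph with the bijection of Theorem \ref{KP} would then yield a bijection between $\{ w \in {\rm Ball}(X) : X w X \subset X \}$ and the operator algebra products on $X$, the product associated to $w$ being $x w y$, as asserted. The place I expect to need the most care is the bookkeeping of the several envelope identifications --- confirming that $u$ really is the identity of $I(X)$ and that the $C^*$-product, the ternary product, and the multiplication in $C^*_{\rm e}((X,u))$ all coincide on the relevant elements --- since everything past that point is a one-line manipulation.
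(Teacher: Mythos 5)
Your proposal is correct and takes essentially the same route as the paper: both reduce to Theorem \ref{KP} by noting that for a unital operator space $I(X)$ may be taken to be a unital $C^*$-algebra with identity $u$ containing $C^*_{\rm e}((X,u))$ as a $C^*$-subalgebra, and both hinge on the single observation $w = z^* = u z^* u \in X z^* X \subset X$ to pull the Kaneda--Paulsen parameter back into $X$. The paper's proof is simply a terser version of yours, leaving the converse direction and the bijection bookkeeping implicit.
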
   \begin{proof}  This
follows immediately from Theorem \ref{KP}, since in this setting
$I(X)$ may be taken to be a $C^*$-algebra, containing
$C^*_{\rm e}(X)$ as a $C^*$-subalgebra,
 with common identity $u$.  If $z$ is as in Theorem \ref{KP}
then $w = z^* = u z^* u \in X z^* X \subset X$.               
  \end{proof}

{\bf Remark.}  The elements $w$ in the unital operator space $X$ in the corollary
constitute the unit ball of the operator algebra
$D = \{ a \in C^*_{\rm e}(X) : X a X \subset X \}$,
which is a subalgebra of $C^*_{\rm e}(X)$.
Thus $D$ could justly be called {\em the operator algebra of
operator algebra products on}  $X$.  
It would be quite desirable to find
a linear-metric characterization of $D$ as a subset of $X$.      

\medskip

We recall the  spaces ${\mathcal M}_{\ell}(X)$ and ${\mathcal M}_r(X)$, 
of left and right  multipliers of $X$, which were
 introduced in \cite{BSh}.  Such multipliers of $X$ were 
`metric-linearly'
characterized by the first author, Effros and Zarikian
(see \cite[Theorem 4.5.2]{BLM}).  For example, if $T : X \to X$ is linear,
then  $T \in {\rm Ball}({\mathcal M}_{\ell}(X))$ iff
$$\left\| \left[ \begin{array}{cl} T(a_{ij}) \\
b_{ij} \end{array} \right] \right\| \leq \left\| \left[ \begin{array}{cl} a_{ij} \\
b_{ij} \end{array} \right] \right\| \, \quad
[a_{ij}], [b_{ij}] \in M_n(X) , n \in \Ndb .$$
  
\begin{lemma} \label{comu}  If $u$ is a coisometry 
in an operator space $X$, then the map $\theta : {\mathcal M}_{\ell}(X) \to X$
defined by $T \mapsto T(u)$, is
a complete isometry from ${\mathcal M}_{\ell}(X)$ into $X$.
Indeed if $T \in {\mathcal M}_{\ell}(X)$ then the ${\mathcal M}_{\ell}(X)$ norm of $T$
equals $\Vert T \Vert_{\rm cb} = \Vert T \Vert = \Vert T (u) \Vert$.
The range of $\theta$ is the set 
$X_\ell(u)$ defined to be $\{ x \in X : x u^* X \subset X , \, x = x u^* u \}$,
product taken in a ternary envelope $Z$ of $X$, and ${\mathcal M}_{\ell}(X)$ may be identified with
$\{ x u^* \in Z Z^* : x \in X_\ell(u) \}$.
If $u$ is a unitary and $Z$ is  
the $C^*$-envelope
$C^*_{\rm e}(X)$ of $(X,u)$, 
then ${\mathcal M}_{\ell}(X)$ is identified with
$\{ a \in X :  a X \subset X \}$ (product taken in $C^*_{\rm e}(X)$).
\end{lemma}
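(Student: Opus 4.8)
The plan is to realize everything inside a ternary envelope and to let the coisometry relation do the work. Since $u$ is a coisometry, I would first fix a ternary envelope $Z$ of $X$ with $u u^* = 1_{Z Z^*}$ (possible by the definition at the end of the introduction); then $Z Z^*$ is a \emph{unital} $C^*$-algebra, $\|u\|^2 = \|u u^*\| = 1$, and, since the left action of $Z Z^*$ on $Z$ is nondegenerate, its unit $u u^*$ acts as the identity operator on $Z$, so in particular $(u u^*) x = x$ for all $x \in X$. Next I would invoke the realization behind the Blecher--Effros--Zarikian characterization \cite[Theorem 4.5.2]{BLM}: ${\mathcal M}_\ell(X)$ is completely isometrically isomorphic, as an operator algebra, to the algebra of symbols $a$ acting by $T(x) = a x$ with $a X \subset X$. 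The key preliminary step is to pin these symbols down inside $Z Z^*$: given such an $a$ one has $a u \in X \subset Z$, and since $u u^*$ fixes $X$ the symbols $a$ and $(a u) u^* = a(u u^*)$ induce the same map on $X$; faithfulness of the correspondence then forces $a = (a u) u^* \in Z Z^*$. Hence ${\mathcal M}_\ell(X) \cong \{ a \in Z Z^* : a X \subset X \}$, with $\|T\|_{{\mathcal M}_\ell(X)} = \|a_T\|$ and completely isometrically, where $a_T$ denotes the symbol of $T$.

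With this realization in place, the complete isometry is a short computation driven by $u u^* = 1$. Since $\theta(T) = T(u) = a_T u$, at the matrix level $[\theta(T_{ij})] = [a_{ij} u] = [a_{ij}]\, u_n$ with $u_n = u \otimes I_n$ and $\|u_n\| = 1$, giving complete contractivity. For the reverse bound I would use $u_n u_n^* = 1_{M_n(Z Z^*)}$ to write $[a_{ij}] = [a_{ij}] u_n u_n^* = [\theta(T_{ij})]\, u_n^*$, so that $\|[a_{ij}]\| \le \|[\theta(T_{ij})]\|$; thus $\theta$ is a complete isometry. Combining $\|T\|_{{\mathcal M}_\ell(X)} = \|a_T\| = \|\theta(T)\| = \|T(u)\|$ with the a priori chain $\|T\|_{{\mathcal M}_\ell(X)} \ge \|T\|_{\rm cb} \ge \|T\| \ge \|T(u)\|$ (the last because $\|u\| = 1$) shows all four quantities coincide.

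The range identification is then pure bookkeeping with the relation $u u^* = 1$. If $x = a u$ with $a X \subset X$, then $x u^* = a(u u^*) = a$, whence $x u^* X = a X \subset X$ and $x u^* u = a u = x$, so $x \in X_\ell(u)$; conversely, for $x \in X_\ell(u)$ I would set $a = x u^* \in Z Z^*$ and note $a X = x u^* X \subset X$ while $\theta$ of the corresponding multiplier is $a u = x u^* u = x$. This shows ${\rm Ran}(\theta) = X_\ell(u)$ and simultaneously that the bijection $a \leftrightarrow x = a u$, $a = x u^*$, identifies ${\mathcal M}_\ell(X)$ with $\{ x u^* : x \in X_\ell(u) \}$.

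Finally, in the unital case $Z = C^*_{\rm e}(X)$ one has $u = 1 = u^*$ and $Z Z^* = C^*_{\rm e}(X)$; the defining conditions of $X_\ell(u)$ collapse ($x = x u^* u$ is automatic and $x u^* X \subset X$ becomes $x X \subset X$), so the previous identification reads ${\mathcal M}_\ell(X) \cong \{ a \in X : a X \subset X \}$ with product in $C^*_{\rm e}(X)$, as claimed. I expect the only genuinely delicate point to be the reduction in the first paragraph --- verifying that the coisometry confines the multiplier symbols to $Z Z^*$ rather than to the a priori larger envelope in which \cite[Theorem 4.5.2]{BLM} realizes them; once that is secured, the remaining steps are just the algebra of $u u^* = 1$.
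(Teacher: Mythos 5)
Your proposal is correct and takes essentially the same route as the paper: realize ${\mathcal M}_{\ell}(X)$ inside the unital $C^*$-algebra $Z Z^*$, note that $\theta$ is then right multiplication by $u$ with completely contractive inverse $x \mapsto x u^*$, chain the norm inequalities, and do the same bookkeeping for the range and the unital case. The only divergence is that the paper simply cites \cite{BSh} (p.\ 302) for the containment ${\mathcal M}_{\ell}(X) \subset Z Z^*$, whereas you re-derive it from the symbol realization behind \cite[Theorem 4.5.2]{BLM} via the identity $a = (au)u^*$; this is a legitimate filling-in of that citation, with the ``faithfulness of the correspondence'' you invoke being exactly the standard nondegeneracy fact of the type recorded in \cite[Proposition 4.4.12]{BLM}.
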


\begin{proof}   If $u$ is a coisometry, so that $u u^* = 1_{Z Z^*}$, then 
by the theory of one-sided multipliers of operator spaces we
may view ${\mathcal M}_{\ell}(X) \subset Z Z^*$
(see e.g.\ the 5th and
6th last lines
of p.\ 302 in \cite{BSh}).    Then $\theta$ is simply right multiplication 
by $u$.
It follows that $\theta$ is a 
complete isometry, since it is a complete contraction with
completely contractive left inverse $x \mapsto x u^*$.  Since it is well known that 
the ${\mathcal M}_{\ell}(X)$ norm of $T$
dominates $\Vert T \Vert_{\rm cb}$, the asserted norm equalities 
hold.    Clearly ${\mathcal M}_{\ell}(X) = \{ x u^* \in Z Z^* : x \in {\rm Ran}(\theta) \}$.
If $T \in {\mathcal M}_{\ell}(X)$ then $T u u^* X = T X \subset X$, so $\theta(T) \in X_\ell(u)$.
Conversely, if $x \in X_\ell(u)$, then $x u^* \in {\mathcal M}_{\ell}(X)$ so that $x = x u^* u \in {\rm Ran}(\theta)$.
Hence ${\rm Ran}(\theta) = X_\ell(u)$.  The rest is obvious.
\end{proof}
 
{\bf Remark.}  Write $Z_2(u)$ for the Pierce 2-space of $u$, this is 
a $C^*$-algebra in the natural `Pierce' product (see
e.g.\ p.\ 230-231 in \cite{BN0}).
The set $X_\ell(u)$ above equals the set 
of elements $z \in Z_2(u)$ such that 
left multiplication in the Pierce product by $z$ maps $X$ into $X$
(that is, $z u^* X \subset X$).
 It is easy to see that $X_\ell(u)$ is a unital subalgebra
of the $C^*$-algebra $Z_2(u)$ in the Pierce product.

\begin{theorem}  \label{chop}  Let $u$ be a coisometry 
in an operator space $X$.  Suppose that $m : X \times X \to X$ is a 
bilinear map such that $m(x,\cdot) \in {\mathcal M}_{\ell}(X)$
for all $x \in X$.  We also suppose that $m(\cdot,u) \in {\rm Ball}({\mathcal M}_r(X))$ (resp.\ $m(\cdot,u) \in  {\mathcal M}_r(X)$).
Then $m$ (resp.\ $m$ multiplied by some positive scalar) is an associative product
 such that $X$ with this product is
an operator algebra (that is, there
exists a completely isometric homomorphism 
from $X$ onto an operator algebra).
  Conversely,
every operator algebra product $m$ on $X$ 
satisfies all the conditions above.
\end{theorem}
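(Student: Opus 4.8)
The plan is to reduce everything to the Kaneda--Paulsen description of operator algebra products in Theorem \ref{KP}, using Lemma \ref{comu} and its right-handed analogue to rewrite $m$ explicitly in the ternary form $x z^* y$ inside a ternary envelope of $X$.

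For the forward direction, I would fix a ternary envelope $Z$ of $X$ with $u u^* = 1_{Z Z^*}$, and view $X \subset Z \subset I(X)$. Fixing $x$, the hypothesis $m(x,\cdot) \in {\mathcal M}_{\ell}(X)$ lets me invoke Lemma \ref{comu}: under the identification there, $m(x,\cdot)$ corresponds to the element $m(x,u)\, u^* \in Z Z^*$ and acts on $X$ by left multiplication, so that
$$m(x,y) = m(x,u)\, u^*\, y, \qquad x,y \in X,$$
all products taken in $Z$. Next I would interpret the right multiplier $\phi := m(\cdot,u)$ via the (right-handed) theory of one-sided multipliers: $\phi$ acts as right multiplication by an element $b$ of the right multiplier algebra of $Z^* Z$, with $\Vert b \Vert = \Vert \phi \Vert_{{\mathcal M}_r(X)} \leq 1$ in the ball case, whence $m(x,u) = x b$. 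Substituting gives $m(x,y) = x\,(b u^*)\,y$.

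Setting $z = u b^*$, so that $z^* = b u^*$, I obtain $m(x,y) = x z^* y$. Since $u$ is a coisometry, $\Vert u \Vert = 1$, so $\Vert z \Vert = \Vert u b^* \Vert \leq \Vert b \Vert \leq 1$ and $z \in {\rm Ball}(I(X))$; moreover $X z^* X \subset X$ because $m$ takes values in $X$. Theorem \ref{KP} then yields at once that $X$ with product $m(x,y) = x z^* y$ is completely isometrically an operator algebra, and this product is associative, being the restriction to $X$ of the associative product $(a,a') \mapsto a z^* a'$ on $I(X)$. For the parenthetical variant, if $\phi = m(\cdot,u) \in {\mathcal M}_r(X)$ has norm $c > 0$, I would run the same argument for $c^{-1} m$, whose slices still satisfy the hypotheses with $c^{-1}\phi$ now in the unit ball; the degenerate case $c = 0$ forces $b = 0$, hence $m \equiv 0$. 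For the converse, if $m$ is an operator algebra product then Theorem \ref{KP} supplies $z \in {\rm Ball}(I(X))$ with $X z^* X \subset X$ and $m(x,y) = x z^* y$; then $m(x,\cdot)$ is left multiplication by $x$ in this operator algebra, hence a left multiplier, so $m(x,\cdot) \in {\mathcal M}_{\ell}(X)$, while $m(\cdot,u)$ is right multiplication by the element $z^* u$ of norm $\leq \Vert z \Vert\, \Vert u \Vert \leq 1$, hence lies in ${\rm Ball}({\mathcal M}_r(X))$.

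The main obstacle will be the bookkeeping in the first two steps: correctly pinning down, through Lemma \ref{comu} and its right analogue, that the two multiplier hypotheses force $m(x,\cdot)$ to be left multiplication by $m(x,u) u^*$ and $m(\cdot,u)$ to be honest right multiplication by some $b$ with $\Vert b \Vert \leq 1$, so that the left and right data splice into the single Kaneda--Paulsen element $z = u b^*$ of norm at most one. Once that identification is in place and the inclusion $X z^* X \subset X$ is read off from $m$ mapping into $X$, both the existence of the completely isometric homomorphism and the associativity follow immediately from Theorem \ref{KP}.
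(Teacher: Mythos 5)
Your main computation is exactly the paper's own: you use Lemma \ref{comu} to write $m(x,y) = m(x,u)u^*y$, then realize $m(\cdot,u)$ as right multiplication by a contraction $b$ in the right multiplier algebra of $Z^*Z$ (inside $(Z^*Z)^{**}$), so that $m(x,y) = x(bu^*)y$; your scalar normalization in the ``respectively'' case and your converse also match what the paper does (the paper leaves the converse implicit).

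The gap is in your final step, where you set $z = ub^*$ and assert $z \in {\rm Ball}(I(X))$ in order to quote Theorem \ref{KP} as a black box. The norm estimate $\Vert ub^* \Vert \leq 1$ is fine, but membership in $I(X)$ is not justified, and it is precisely the hypothesis that Theorem \ref{KP} needs. The element $b$ lies only in the right multiplier algebra of $Z^*Z$, a subalgebra of $(Z^*Z)^{**}$, so $ub^*$ lives naturally in a bidual; it need not lie in $I(X)$, nor even in $Z$: since $b^*$ is merely a \emph{left} multiplier of the $C^*$-algebra $Z^*Z$, one has $b^*(Z^*Z) \subset Z^*Z$ but not $(Z^*Z)b^* \subset Z^*Z$, so $Zb^* = Z(Z^*Z)b^*$ need not fall back into $Z$. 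You also cannot patch this by using the bijection in Theorem \ref{KP} to produce a legitimate $z' \in {\rm Ball}(I(X))$, because that direction presupposes that $m$ is already an operator algebra product --- which is what you are trying to prove. The repair is what the paper does: bypass $I(X)$ entirely and invoke the result that powers the relevant direction of Theorem \ref{KP}, namely Remark 2 on p.\ 194 of \cite{BRS}. Represent everything (say the bidual of the linking $C^*$-algebra of $Z$) faithfully on a Hilbert space; then $V = bu^*$ is a contraction there satisfying $xVy = m(x,y) \in X$ for all $x,y \in X$, and that remark yields at once that $m$ is an associative product making $X$ completely isometrically an operator algebra. With that one substitution, your argument becomes the paper's proof.
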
 

\begin{proof}  In the respectively case we can multiply
$m$  by a positive scalar to ensure that
$m(\cdot,u) \in {\rm Ball}({\mathcal M}_r(X))$.
 Let $Z$ be the ternary envelope of $X$, and view $X \subset Z$
and
${\mathcal M}_{\ell}(X) \subset Z Z^*$ as in the proof of Lemma \ref{comu}.
Define $v(x) = m(x,\cdot)  \in Z Z^*$, so that 
$m(x,y) = v(x) y$.  Similarly, we can view $m(\cdot,u)$
as a contractive right multiplier of 
$Z^* Z$, hence as a contraction $R$ in $(Z^* Z)^{**}$. 
Thus
 $$m(x,y) = v(x) y = v(x) u u^* y =
m(x,u) u^* y = x (Ru^*) y, \qquad x,y \in X .$$     Now the result follows from Remark 2 on p.\ 194 of \cite{BRS}.
\end{proof}

{\bf Remarks.}  1) \  In the previous theorem, the element $u$ need not be related to any identity, or 
 one-sided identity, for the ensuing operator algebra product.   

\smallskip

2) \ Theorem \ref{chop} answers the last question in \cite{KP} for  operator spaces containing a coisometry or isometry,
and in fact in this case gives a stronger result than the one discussed there.  

\smallskip

3) \ If $u$ is a unitary in $X$, then the `respectively'
assertion of Theorem \ref{chop} is true with the positive 
scalar mentioned there equal to $1$, if we also ask that $m$ be contractive as
a bilinear map.  This follows from a slight modification of the proof, 
using the other-handed version of Lemma \ref{comu} (indeed,   
 the multiplier norm of a right multiplier $T$ of a  unital operator space 
$(X,u)$ equals $\Vert T(u) \Vert$). 
  
\begin{corollary} \label{chco}  Let $u$ be a
coisometry in an operator space $X$ (or equivalently,
 $\Vert [ u_n \; \; x ] \Vert = \sqrt{2}$ for $n \in \Ndb$ and
every matrix $x \in M_n(X)$ of norm $1$).
Suppose that $m : X \times X \to X$ is a 
bilinear map such that $m(x,\cdot) \in {\mathcal M}_{\ell}(X)$
for all $x \in X$.  We also suppose that $m(x,u) = x$ for 
all $x \in X$. Then $m$ is an associative product,
and $X$ with this product is completely isometrically 
isomorphic to an operator algebra with
a two-sided identity (namely, $u$). 
Conversely, every unital operator algebra satisfies
all the conditions above.   
 \end{corollary}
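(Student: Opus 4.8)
The plan is to read this off from Theorem \ref{chop}. The key observation is that the hypothesis $m(x,u) = x$ for all $x \in X$ says precisely that $m(\cdot,u)$ is the identity map ${\rm id}_X$ on $X$. The identity map is a right multiplier of multiplier norm $1$, so $m(\cdot,u) \in {\rm Ball}({\mathcal M}_r(X))$. This places us in the non-`respectively' case of Theorem \ref{chop}: no rescaling is needed, and that theorem immediately yields that $m$ is an associative product for which $X$ is completely isometrically isomorphic to an operator algebra. This disposes of everything except the claim that $u$ is a two-sided identity for this product.

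To identify the identity, I would follow the explicit computation in the proof of Theorem \ref{chop}. Let $Z$ be the ternary envelope of $X$, and view $X \subset Z$ and ${\mathcal M}_{\ell}(X) \subset Z Z^*$ as in Lemma \ref{comu}. Under the identification there, the multiplier $v(x) = m(x,\cdot)$ corresponds to the element $m(x,u) u^* = x u^* \in Z Z^*$, using $m(x,u) = x$, and it acts on $Z$ by left multiplication. Hence the product is forced to be $m(x,y) = v(x) y = x u^* y$, the ternary product in $Z$. Since $u$ is a coisometry, $u u^* = 1_{Z Z^*}$; because $Z Z^* Z$ is dense in $Z$ (equivalently, an approximate identity of $Z Z^*$ converges to the unit $u u^*$ and acts as a left identity on $Z$), this unit satisfies $u u^* y = y$ for all $y \in Z$. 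Therefore $m(u,y) = u u^* y = y$, so $u$ is a left identity; combined with the given relation $m(x,u) = x$, we conclude that $u$ is a two-sided identity of norm $1$.

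For the converse, let $A$ be a unital operator algebra with identity $u$ of norm $1$ and take $m(x,y) = xy$. Being an identity of norm $1$, $u$ is in particular a left identity, hence a coisometry in $A$ by Lemma \ref{leco}, and the parenthetical reformulation of `coisometry' is exactly Theorem \ref{uospr}. For fixed $x$, the map $m(x,\cdot)$ is left multiplication by $x$ in $A$, which is a left multiplier, so $m(x,\cdot) \in {\mathcal M}_{\ell}(X)$; and $m(x,u) = xu = x$. Thus all the stated conditions hold.

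The only genuinely delicate point is the identification of $u$ as a two-sided identity: the right-identity property is handed to us by hypothesis, but the left-identity property must be extracted by recognizing, through Lemma \ref{comu}, that the product is necessarily $x u^* y$ in the ternary envelope and then invoking the coisometry relation $u u^* = 1_{Z Z^*}$ together with the fact that this unit acts as the identity on all of $Z$. Everything else is a direct bookkeeping application of Theorem \ref{chop} and of the characterizations already established.
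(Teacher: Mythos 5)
Your proposal is correct, and its first half is exactly the paper's argument: the hypothesis $m(x,u)=x$ puts you in the non-rescaled case of Theorem \ref{chop}, which gives the operator algebra structure, and both you and the paper then read off $m(x,y)=xu^*y$ from the computation in that theorem's proof (the paper phrases this as ``$R=1$''). Where you diverge is the key remaining step, the left-identity property of $u$. The paper argues structurally: since $u$ is a right identity for an operator algebra product, the other-handed variant of Lemma \ref{leco} makes $u$ an isometry in $X$; combined with the coisometry hypothesis and \cite[Lemma 2.3]{BN}, $u$ is then a \emph{unitary}, so the ternary envelope $Z$ becomes a unital $C^*$-algebra under $xu^*y$ with identity $u$, from which the two-sided identity claim is immediate. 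You instead stay with the coisometry relation $uu^*=1_{ZZ^*}$ alone and invoke the TRO fact that $ZZ^*Z$ is dense in $Z$ (e.g.\ $z(z^*z)^{1/n}\to z$), so that the unit of $ZZ^*$ acts as the identity on all of $Z$, giving $m(u,y)=uu^*y=y$ directly. Your route is more elementary and self-contained --- it avoids Lemma \ref{leco} and the unitarity lemma from \cite{BN} entirely --- while the paper's route buys the stronger intermediate conclusion that $u$ is a unitary in $X$ (so $(X,u)$ is a unital operator space and $Z$ is its $C^*$-envelope), which is the structurally natural statement for a unital operator algebra. Your treatment of the converse (identity of norm one is a left identity, hence a coisometry by Lemma \ref{leco}; left multiplications are left multipliers) is also a correct filling-in of what the paper leaves implicit.
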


\begin{proof}  By
the theorem,  $X$ with product $m$ is
an operator algebra.  Since $u$ is a right identity
it is an isometry by the `other-handed' variant of Lemma
\ref{leco}.    By \cite[Lemma 2.3]{BN}, $u$ is a unitary in $A$, 
hence is a unitary in the ternary envelope $Z$.
Then $Z$ is a $C^*$-algebra with product $x u^* y$.
In the proof of the last theorem, $R = 1$, and $m(x,y) = x u^* y$, 
and now it is clear that $u$ is a two-sided identity for $m$.      
\end{proof} 
 
Corollary \ref{chco} takes  longer to state than the 
characterization of unital operator algebras from \cite{BRS}.  However
the latter characterization is in terms of a product of {\em two large matrices},
whereas the condition 
that $m(x,\cdot) \in {\mathcal M}_{\ell}(X)$ in Corollary \ref{chco},
is as discussed 
above Lemma \ref{comu}, essentially the requirement that
$$\left\| \left[ \begin{array}{cl} m(x,a_{ij}) \\
b_{ij} \end{array} \right] \right\| \leq \left\| \left[ \begin{array}{cl} a_{ij} \\
b_{ij} \end{array} \right] \right\| \, \quad
[a_{ij}], [b_{ij}] \in M_n(X) , $$
for $n \in \Ndb$ and $x \in {\rm Ball}(X)$.    
We emphasize that this 
uses a small ($1 \times 1$) matrix (namely $x$) and one
large $n \times (2n)$ matrix, and in particular uses at most one operation
in each entry of the matrix, as opposed to the 
many operations (sums and products) that appear in the 
entries of a product of two large matrices.  

\section{Metric characterizations of operator algebras
referencing a containing $C^*$-algebra}

In 3(c) of \cite[Remark 3.5]{BN} the authors gave a metric-linear characterization, 
related to our norm formulae for 
$r_x$ above, of the adjoint
$x^{*}$  of any operator $x$ in an operator system $X$. 
(In particular, if $x$ and $z$ are contractive operators then
$z = x^{*}$ if and only if for all $t \in \mathbb{R}$ 
$$\left\| \left[ \begin{array}{ccl} t \cdot 1& x\\
-z&t \cdot 1
\end{array} \right] \right\| \le \sqrt{1+t^{2}}.)$$

Knowing this, we may freely reference adjoints in the following metric
characterization of operator algebras:

\begin{theorem} Suppose $A$ is a subspace of a unital $C^*$-algebra $B$. Then  $A$ is closed under multiplication if and only if for each pair of elements $x \in A$ and $y\in A^{*}$ with
$\Vert y \Vert \leq 1$, there exists an element $z \in A$ such that for all $b \in B$,
 $$\left\| \left[ \begin{array}{ccccl} 0 & y & 1 & 0 \\
2 \cdot 1 & x & z & b
\end{array} \right] \right\|  = ||[2 \cdot 1, x,z,b]|| . $$ 
\end{theorem}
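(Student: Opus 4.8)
The plan is to square the displayed norm identity and read everything off from the positive matrix $MM^{*}\in M_{2}(B)$, where $M$ denotes the $2\times 4$ matrix appearing in the statement. First I would compute this product: writing $P=1+yy^{*}$, $\; d=xy^{*}+z$, and $Q_{b}=4\cdot 1+xx^{*}+zz^{*}+bb^{*}$, one finds
$$MM^{*}=\left[ \begin{array}{cc} P & d^{*} \\ d & Q_{b} \end{array} \right], \qquad \Vert [\, 2\cdot 1,\, x,\, z,\, b\,] \Vert^{2}=\Vert Q_{b}\Vert .$$
Thus, after squaring, the asserted equality $\Vert M\Vert=\Vert[\,2\cdot 1,x,z,b\,]\Vert$ is exactly $\Vert MM^{*}\Vert=\Vert Q_{b}\Vert$, and the whole theorem reduces to the claim that this holds (for a suitable $z\in A$ and all $b$) precisely when the off-diagonal corner $d=xy^{*}+z$ can be annihilated, i.e.\ precisely when $xy^{*}\in A$. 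Granting this, the two directions follow by routine bookkeeping: $xy^{*}\in A$ for all $x\in A$ and $y\in A^{*}$ is, on writing $y=w^{*}$ with $w\in A$ and rescaling to drop the normalization $\Vert y\Vert\le 1$, nothing but closure of $A$ under multiplication.

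For the forward direction, if $A$ is closed under multiplication and $x\in A$, $y\in A^{*}$, then $y^{*}\in A$ forces $xy^{*}\in A$, so I would take $z=-xy^{*}\in A$. This makes $d=0$, hence $MM^{*}$ is block diagonal and $\Vert MM^{*}\Vert=\max\{\Vert P\Vert,\Vert Q_{b}\Vert\}$. Since $\Vert P\Vert=1+\Vert y\Vert^{2}\le 2$ while $Q_{b}\ge 4\cdot 1$ gives $\Vert Q_{b}\Vert\ge 4$, this maximum is $\Vert Q_{b}\Vert$, and the required identity holds for every $b$.

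For the converse the key step is a Schur complement argument followed by one clever choice of $b$. Assume $\Vert MM^{*}\Vert=\Vert Q_{b}\Vert$ for all $b$, and set $\lambda=\Vert Q_{b}\Vert=\Vert MM^{*}\Vert$, so that $\lambda\cdot 1-MM^{*}\ge 0$. Its top-left corner $\lambda\cdot 1-P$ is invertible, because $\lambda\ge 4$ and $\Vert P\Vert\le 2$; hence positivity of the block matrix $\lambda\cdot 1-MM^{*}$ is equivalent to the Schur complement inequality $\lambda\cdot 1-Q_{b}\ge d(\lambda\cdot 1-P)^{-1}d^{*}$. The decisive move is to flatten $Q_{b}$ to a scalar: with $Q_{0}=4\cdot 1+xx^{*}+zz^{*}$, take $b=(\Vert Q_{0}\Vert\cdot 1-Q_{0})^{1/2}$, a positive element of the unital $C^{*}$-algebra $B$ since $\Vert Q_{0}\Vert\cdot 1-Q_{0}\ge 0$. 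Then $bb^{*}=\Vert Q_{0}\Vert\cdot 1-Q_{0}$, so $Q_{b}=\Vert Q_{0}\Vert\cdot 1$, $\lambda=\Vert Q_{0}\Vert$, and $\lambda\cdot 1-Q_{b}=0$. The Schur complement inequality now forces $d(\lambda\cdot 1-P)^{-1}d^{*}=0$; writing $w=(\lambda\cdot 1-P)^{-1/2}d^{*}$ this says $w^{*}w=0$, so $w=0$ and hence $d^{*}=0$. Therefore $d=xy^{*}+z=0$ and $xy^{*}=-z\in A$, which is the required conclusion.

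The main obstacle is precisely this converse implication, and it is where the quantifier ``for all $b\in B$'' does the real work. For a fixed $b$ (say $b=0$) the Schur complement inequality only controls $d$ on the part of the spectrum where $Q_{b}$ nearly attains its norm, and by itself it would not force $d=0$; the role of varying $b$ is exactly to make $Q_{b}$ a scalar multiple of the identity, at which point the inequality degenerates and pins down $d$. I expect the only points needing care to be the elementary estimates $\Vert P\Vert\le 2<4\le\Vert Q_{b}\Vert$ securing invertibility of $\lambda\cdot 1-P$, and the observation that the chosen square root genuinely lies in $B$.
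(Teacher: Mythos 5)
Your proposal is correct and takes essentially the same route as the paper: both directions rest on computing $MM^{*}$, with $z=-xy^{*}$ giving the forward implication, and the converse using exactly the same choice $b=(\Vert Q_{0}\Vert \cdot 1-Q_{0})^{1/2}$ (the paper writes it as $\sqrt{\Vert xx^{*}+zz^{*}\Vert\cdot 1-xx^{*}-zz^{*}}$, which is the same element) to turn the $(2,2)$ corner of $MM^{*}$ into a scalar. The only cosmetic difference is the finishing move: where you run a Schur complement argument on $\lambda\cdot 1-MM^{*}\ge 0$, the paper simply bounds $\Vert MM^{*}\Vert$ from below by the norm of its second row, namely $\sqrt{\Vert xy^{*}+z\Vert^{2}+\lambda^{2}}$, and both one-line arguments validly force $xy^{*}+z=0$.
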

\begin{proof} If $A$ is closed under multiplication, we may choose $z = -xy^{*}$. Multiplying the above matrix by its adjoint we see that the condition holds.  Conversely, suppose the condition holds.  Let $b = \sqrt{||xx^{*}+zz^{*}||\cdot 1-xx^{*}-zz^{*}}$. Multiplying the above matrix by its adjoint we see that 
$$\left\|  \left[ \begin{array}{ccl} yy^{*}+1&yx^{*}+z^{*}\\
xy^{*}+z&(4 + ||xx^{*}+zz^{*}||) \cdot 1
\end{array} \right] \right\|  = \max \{||[y,1]||,||[2 \cdot 1, x,z,b]||\}^2,$$
and this maximum actually equals $$\max
 \{1+||y||^{2},4 + ||xx^{*}+zz^{*}||\} = 4 + ||xx^{*}+zz^{*}||,$$ since
$\Vert y \Vert \leq 1$.  This implies that
$$\sqrt{||xy^{*}+z||^{2}+(4 + ||xx^{*}+zz^{*}||)^{2}} \le 4 + ||xx^{*}+zz^{*}||,$$ 
hence $||xy^{*}+z||=0$ and $xy^* = -z \in A$.   \end{proof}

  Clearly many other algebraic conditions $A$ might satisfy 
can be characterized by a variant of this theorem. 
For example,  a pair $x,y \in A$ satisfies $xy = 1$ if and only for all $b \in B$, the displayed condition is satisfied with $z=1$. Orthogonality, commutivity, normality and any other algebraic condition may be similarly characterized. We emphasize the following:

\begin{corollary}
Suppose that $A$ is a subspace of a unital $C^*$-algebra $B$. 
\begin{enumerate}
\item If  $x \in A$ then $x A \subset A$ 
 if and only if for all  $y\in {\rm Ball}(A^{*})$ there exists an element 
$z \in A$ such that for all $b \in B$,
 $$\left\| \left[ \begin{array}{ccccl} 0 & y & 1 & 0\\
2 \cdot 1 & x & z & b
\end{array} \right] \right\| = ||[2 \cdot 1,x,z,b]|| , $$
\item If $y \in {\rm Ball}(A)$ then $A y \subset A$ 
 if and only if for all  $x \in A$ there exists an element $z \in A$ 
such that for all $b \in B$,
 $$\left\|  \left[ \begin{array}{ccccl} 0 & y^* & 1 & 0 \\ 2 \cdot 1 &  x & z & b
 \end{array} \right]
\right\| = ||[2 \cdot 1, x,z,b]|| , $$
\item If $x \in A$ then $A x A \subset A$ if and only if for all  $y\in {\rm Ball}(A^{*})$
 there exists an element $z \in \{ b \in B : A b \subset A \}$ such that for all $b \in B$,
the equality in {\rm (1)} holds.
\end{enumerate}
\end{corollary}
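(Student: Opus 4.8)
The plan is to extract from the proof of the preceding theorem the following pointwise fact, which is really all that proof establishes and which uses no multiplicative closure of $A$: for a fixed $x \in A$, a fixed $y \in {\rm Ball}(A^*)$, and a fixed $z \in B$, the equality
$$\left\| \left[ \begin{array}{ccccl} 0 & y & 1 & 0 \\ 2 \cdot 1 & x & z & b \end{array} \right] \right\| = \Vert [2 \cdot 1, x, z, b] \Vert$$
holds for every $b \in B$ if and only if $z = -x y^*$. One direction is easy: if $z = -xy^*$ then the off-diagonal entry of the relevant $2 \times 2$ product vanishes and the norm collapses onto its dominant diagonal corner. The other direction is exactly the computation in that proof: specializing to $b = \sqrt{\Vert xx^* + zz^* \Vert \cdot 1 - xx^* - zz^*}$ turns the $(2,2)$ corner into a scalar multiple of $1$, and comparing the resulting lower bound $\sqrt{\Vert xy^* + z \Vert^2 + (4 + \Vert xx^* + zz^* \Vert)^2}$ with the actual value $4 + \Vert xx^* + zz^* \Vert$ forces $xy^* + z = 0$; here the hypothesis $\Vert y \Vert \leq 1$ is what guarantees the $(2,2)$ corner dominates. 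I would state this once and feed it into all three parts.

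With this fact in hand, each equivalence reduces to reindexing. For (1), the condition ``for all $y \in {\rm Ball}(A^*)$ there is $z \in A$ with the displayed equality for all $b$'' becomes, directly, ``$-xy^* \in A$ for all $y \in {\rm Ball}(A^*)$''. Since $*$ is an isometric bijection carrying $A^*$ onto $A$, the set $\{ y^* : y \in {\rm Ball}(A^*) \}$ is precisely ${\rm Ball}(A)$, so the condition reads ``$xa \in A$ for all $a \in {\rm Ball}(A)$'', which by scaling (as $A$ is a subspace) is exactly $xA \subset A$. Part (2) is the same computation once one notices that its matrix carries $y^*$ in the $(1,2)$ slot: applying the pointwise fact with $y^*$ in place of $y$ (legitimate, since $y \in {\rm Ball}(A)$ gives $y^* \in {\rm Ball}(A^*)$) returns $z = -x(y^*)^* = -xy$, so the condition becomes ``$xy \in A$ for all $x \in A$'', namely $Ay \subset A$.

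For (3) I would first note that $D = \{ b \in B : Ab \subset A \}$ is a linear subspace of $B$, so $z \in D$ iff $-z \in D$. The pointwise fact then converts ``for all $y \in {\rm Ball}(A^*)$ there is $z \in D$ with the equality of (1)'' into ``$xy^* \in D$ for all $y \in {\rm Ball}(A^*)$'', i.e.\ $Axy^* \subset A$ for all $y^* \in {\rm Ball}(A)$, which by scaling is $Axa \subset A$ for all $a \in A$, that is $AxA \subset A$. I expect the only genuine care needed, and so the main (if minor) obstacle, to lie in part (3): one must keep straight that the required membership is $z \in D$ rather than $z \in A$, check that $-xy^* \in D$ says exactly $Axy^* \subset A$, and confirm that letting $y^*$ range over ${\rm Ball}(A)$ recovers the two-sided containment $AxA \subset A$ rather than merely $xA \subset A$. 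Everything else is the isometric bijection $y \mapsto y^*$ together with the subspace scaling already used in (1) and (2).
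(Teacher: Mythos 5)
Your proposal is correct and is exactly the argument the paper intends: the paper states this corollary without separate proof, as an immediate ``variant'' of the preceding theorem, whose proof is pointwise in $x$, $y$, $z$ exactly as you observe (neither direction there uses closure of $A$ under multiplication, only $\Vert y \Vert \leq 1$ and the special choice of $b$). Your extraction of the pointwise fact that the displayed equality holds for all $b$ iff $z = -xy^*$, followed by the reindexing via the isometric bijection $y \mapsto y^*$ and the linearity of $D = \{ b \in B : Ab \subset A \}$ in part (3), is precisely how the corollary follows.
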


Note that $A$ is a unital operator space and $B$ its $C^*$-envelope,
for example, then the last result characterizes left, right, and 
quasi- multipliers of $A$ (see the last assertion of
Lemma \ref{comu} and Corollary \ref{kpc}).

We thank Matt Gibson for helpful discussions relating to Section 6.

\end{document}